\newcommand{\R}{\mathbb{R}}
\newcommand{\LL}{L}
\newcommand{\mo}[1]{|#1|}
\theoremstyle{plain}
\newtheorem{theo}{Theorem}[section]
\newtheorem{lemma}[theo]{Lemma}
\newtheorem{rem}[theo]{Remark}
\theoremstyle{definition}
\theoremstyle{remark}
\newtheorem*{rema*}{Remark}
\def\bu{{\bf{u}}}
\def\bv{{\bf{v}}}
\def\ue{u^e}
\def\ve{v^e}
\def\cn{\mathbb{C}}
\begin{document}

\title[Dispersion for Schr\"odinger with {$\delta$ potentials and for $\delta$ trees}]{Dispersion for the Schr\"odinger equation \\  {on the line with multiple Dirac delta potentials and on delta trees}}  

\author[V. Banica]{Valeria Banica}
\address[V. Banica]{Laboratoire de Math\'ematiques et de Mod\'elisation d'\'Evry (UMR 8071)\\ D\'epartement de Math\'ematiques\\ 
Universit\'e d'\'Evry, 23 Bd. de France, 91037 Evry\\ 
France.} 
\email{Valeria.Banica@univ-evry.fr}
\author[L. I. Ignat]{Liviu I. Ignat}
\address[L. I. Ignat]{Institute of Mathematics ``Simion Stoilow'' of the Romanian Academy\\21 Calea Grivitei Street \\010702 Bucharest \\ Romania 
\hfill\break\indent \and
\hfill\break\indent 
Faculty of Mathematics and Computer Science, University of Bucharest, 14 Academiei Str., 010014 Bucharest, Romania.
}

\email{liviu.ignat@gmail.com}
\thanks{V.B. is partially
  supported by the French ANR projects  "R.A.S." ANR-08-JCJC-0124-01 and "SchEq" ANR-12-JS01-0005-01.\\ L. I. is partially supported by   PN-II-ID-PCE-2011-3-0075 
of CNCSIS--UEFISCDI Romania, MTM2011-29306-C02-00, MICINN, Spain, and ERC Advanced Grant FP7-246775 NUMERIWAVES}

\begin{abstract}
In this paper we consider the time dependent one-dimensional Schr\"odinger equation with multiple Dirac delta potentials {of different  strengths}. 
We prove that the classical dispersion property holds under some restrictions on the strengths and on the lengths of the finite intervals. The result is obtained in a more general setting of a Laplace operator on a tree 
with $\delta$-coupling conditions at the vertices. The proof relies on a careful analysis of the properties of the resolvent of the associated Hamiltonian. With respect to the analysis done in \cite{MR2858075} for Kirchhoff conditions, here the resolvent is no longer in the framework of Wiener algebra of almost periodic functions, and its expression is harder to analyze.
\end{abstract}

\maketitle

\section{Introduction}

In this paper we are concerned with the dispersive properties of the Schr\"odinger equation with multiple Dirac delta potentials and more generally for the Schr\"odinger equation on a tree with $\delta$-coupling conditions at the vertices. 

Let us first recall that the linear Schr\"{o}dinger equation on the line 
\begin{equation}\label{sch1}
\left\{\begin{array}{l}
  iu_t(t,x)+ u_{xx}(t,x) = 0,  \,(t,x)\in \R\times\R,\\
  u(0,x)  =u_0(x), \,x\in \R,
    \end{array}\right.
\end{equation}
conserves the $\LL^2$-norm
\begin{equation}\label{energy}
\|e^{it\Delta}u_0\|_{\LL^2(\R)}=\|u_0\|_{\LL^2(\R)}
\end{equation}
 and enjoys the dispersive estimate
\begin{equation}\label{linfty}
\|e^{it\Delta}u_0\|_{L^\infty(\R)}\leq \frac C{\sqrt{|t|}}\|u_0\|_{\LL^1(\R)}, \
\ t\neq 0.
\end{equation}
It is classical to obtain from these two inequalities the well-known space-time Strichartz estimates (\cite{0372.35001},\cite{MR801582}), for $r\geq 2$,
\begin{equation}\label{dsch0001}
    \|e^{it\Delta}u_0\|_{\LL^\frac{4r}{r-2}_t(\R,\,\LL^r_x(\R))}\leq C\|u_0\|_{\LL^2(\R)}.
\end{equation}
These dispersive estimates have been successfully applied to obtain results for the nonlinear Schr\"odinger equation (see for example \cite{1055.35003}, \cite{MR2233925} and the reference therein).

Our general framework in this paper refers to the Dirac's delta Hamiltonian on a tree with {a finite number of vertices}, with the external edges (those that have only one internal vertex as an endpoint) formed by infinite strips. The particular case of a tree with all the internal vertices having degree two will give us a result for the Schr\"odinger equation on the line with several Dirac potentials. Although the later is a corollary of the former, we shall start our presentation by the case of the line. This is motivated by the fact that historically dispersive properties have been studied first in this case (only with one or with two delta Dirac potentials) and that the previous results on graphs concern only star-shaped graphs (with only one vertex), where the proofs are in the same spirit as on the line with one Dirac delta potential.

So we first consider the semigroup $\exp(-it H_\alpha)$ where $H_\alpha$ is a perturbation of the Laplace operator with {$n$} Dirac delta potentials with   real strengths $\{\alpha_j\}_{j=1}^p$
\begin{equation}\label{Halpha}
H_{\alpha}=-\Delta + \sum _{j=1}^p \alpha_j \delta (x-x_j).
\end{equation}

%
 
The spectral properties of the Laplacian with multiple Dirac delta potentials on $\mathbb R^n$ have been extensively studied. Operator $H_\alpha$ has
at most $p$ eigenvalues which are all negative and simple, and there are no eigenvalues in case of positive strengths $\alpha_i>0$. The remaining part of the spectrum is absolutely continuous and 
$\sigma_{ac}(H_\alpha)=[0,\infty)$. We will denote along the paper $P_e$  the $L^2$ projection onto the subspace of the eigenfunctions and by $P$ 
 the projection outside the discrete spectrum.
Regarding the spectral properties of $H_\alpha$ we  refer to \cite[Ch.~II. 2]{MR2105735} and to the references within. The time dependent propagator of the linear Schr\"odinger equation has also been considered in the case of one Dirac delta potential  \cite{MR851483}, \cite{MR975175}, \cite{MR2188549}, \cite{MR2560311}, or one point interactions
  \cite{MR1295192}, \cite{MR2566277}, \cite{MR2457813}, or two symmetric Dirac delta  potentials  \cite{MR2608273}. In particular, in the case of the line with one delta interaction, without sign condition on the strength,  dispersive estimates has been proved but for $e^{-itH_\alpha}P$ (\cite{MR2188549},\cite{MR2560311}). A similar result was proved to hold in case of two point interactions, under a condition on the delta-strength and on the distance between the location of the point interactions (\cite{MR2608273}, see also \cite{AnFe}). 
  Also in \cite{MR2791127} the problem of dispersion for several delta potential has been considered, as well as wave operators bounds from which dispersive estimate can be obtained as a consequence. Here Jost and distorted plane functions are used in spectral formulae. General conditions for the main results to hold are given for general potentials with singularities. In the case of Dirac potentials these are proved to hold for the case of one Dirac potential and for the case of the double delta well potential.
  Concerning the nonlinear Schr\"odinger equation with a Dirac delta potential, standing wave and bound states have been analyzed \cite{MR2379459}, \cite{MR2457813}, \cite{MR2450497}, as well as the time dynamics of solitons \cite{MR2318852}, \cite{MR2342704}, \cite{MR2335125}. 

 For stating our first result concerning the case of several Dirac potentials, we need to introduce the following functions. With the notations in Lemma \ref{dets} in the case when $n_j=2$, we denote $f_p=\det D_p$ and $g_p=\frac{\det\tilde D_p}{\det D_p}$, defined  by recursion as follows:
 $$f_1(\omega)=\frac{2\,\omega+\alpha_1}{\omega+\alpha_1},\,\,\,f_p(\omega)=\frac{2\,\omega+\alpha_p}{\omega+\alpha_p}e^{\omega a_{p-1}}f_{p-1}(\omega)\left(1-\frac{\alpha_p}{2\,\omega+\alpha_p}\,e^{-2\omega a_{p-1}} g_{p-1}(\omega)\right),$$
where
$$g_1(\omega)=\frac{\alpha_1}{n_1\,\omega+\alpha_1},\,\,\,g_p(\omega)=\frac{\frac{\alpha_p}{n_p\,\omega+\alpha_p}-\frac{-2\,\omega+\alpha_p}{2\,\omega+\alpha_p}\,e^{-2\omega a_{p-1}}g_{p-1}(\omega)}{1-\frac{\alpha_p}{2\omega+\alpha_p}\,e^{-2\omega a_{p-1}}g_{p-1}(\omega)}.$$
These functions will appear naturally when computing the resolvent of $H_\alpha$.

\begin{theo}  \label{disp-0}For any  $\{\alpha_j\}_{j=1}^p$ and $\{x_j\}_{j=1}^p$ such that 
\begin{equation}\label{cond-0}
\partial_{\omega}^{p-1} f_p\,_{\scriptscriptstyle{\vert \omega=0}}\neq 0,
\end{equation} 
 the solution of the linear Schr\"odinger equation on the line with multiple delta interactions of strength $\alpha_j$ located at $x_j$ satisfies the
dispersion inequality 
\begin{equation}\label{dispersion-0}
\| e^{-itH_\alpha}Pu_0\|_{L^\infty(\R)}\leq \frac{C}{\sqrt {|t|}}
\| u_0\|_{L^1(\R)},\,\,\, \forall t\neq 0.
\end{equation}
Moreover, in case of positive strengths $\alpha_j>0$, condition \eqref{cond-0} is fulfilled and we have
\begin{equation}\label{dispersion-00}
\| e^{-itH_\alpha}u_0\|_{L^\infty(\R)}\leq \frac{C}{\sqrt {|t|}}
\| u_0\|_{L^1(\R)},\,\,\, \forall t\neq 0.
\end{equation}
\end{theo}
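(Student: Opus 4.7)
The plan is to derive the dispersive bound from a Stone–formula representation of $e^{-itH_\alpha}P$ in terms of the resolvent $R(z)=(H_\alpha-z)^{-1}$. Writing
\[
e^{-itH_\alpha}Pu_0(x)=\frac{1}{2\pi i}\int_0^\infty e^{-it\lambda}\bigl(R(\lambda+i0)-R(\lambda-i0)\bigr)u_0(x)\,d\lambda
\]
and making the change of variable $\lambda=\omega^2$ with $\omega>0$, one obtains an oscillatory integral for the kernel of the propagator whose phase is essentially $t\omega^2-\omega\,\phi(x,y)$ and whose amplitude is built from $R(\omega^2;x,y)$. The first step is therefore to compute this kernel explicitly. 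On each of the $p+1$ intervals cut out by $x_1<\cdots<x_p$, the Green function of $-u''-\omega^2 u=\delta_y$ is a linear combination of $e^{\pm i\omega x}$ (with the appropriate outgoing convention in the exterior rays); the $\delta$-coupling conditions at each $x_j$ (continuity of $u$ and jump of $u'$ equal to $\alpha_j u(x_j)$) produce a linear system for the $2(p+1)$ amplitudes that reduces, after elimination of the outgoing amplitudes, to a $p\times p$ linear system with matrix $D_p$. The recursion for $f_p=\det D_p$ and $g_p=\det\tilde D_p/\det D_p$ follows by peeling one interval at a time and is exactly the content of Lemma~\ref{dets} in the case $n_j=2$.

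Step two is to insert this explicit resolvent into the Stone formula. The propagator kernel becomes a finite sum of oscillatory integrals
\[
K_t(x,y)=\sum_{j,k}\int_0^\infty e^{-it\omega^2}\,e^{i\omega\,\phi_{jk}(x,y)}\,m_{jk}(\omega)\,d\omega,
\]
where $\phi_{jk}$ is a piecewise–linear function of $(x,y)$ and the symbols $m_{jk}$ are rational expressions in $\omega$ with bounded factors $e^{-2\omega a_l}$, denominators built from $f_p(-i\omega)$ and from the factors $2\omega+\alpha_j$, and numerators of adjugate type. The expected $|t|^{-1/2}$ bound, uniform in $(x,y)$, then follows from the classical van~der~Corput lemma provided $\|m_{jk}\|_{L^\infty}+\|\omega\,m_{jk}'\|_{L^1}$ is uniformly controlled in $(x,y)$. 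At large $\omega$ this is routine: the exponential factors $e^{-2\omega a_l}$ decay and $f_p(-i\omega)$ tends to a nonzero constant times a product of $2\omega+\alpha_j$, so the symbols behave polynomially with integrable derivative.

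The main obstacle is the threshold regime $\omega\to 0$. A Taylor expansion of the recursion shows that $f_p(-i\omega)$ vanishes at $\omega=0$ to some order $k\le p-1$, and the non-resonance condition~\eqref{cond-0} is precisely what forces $k=p-1$. Under~\eqref{cond-0} I would show, by Cramer's rule applied to the same $p\times p$ system, that the numerators of the $m_{jk}$ vanish at $\omega=0$ to exactly the same order as $f_p(-i\omega)$: they are adjugate determinants that inherit the threshold structure of $\det D_p$, and the projection $P$ removes any residue supported by a would-be zero-energy resonance. Consequently the apparent singularity at $\omega=0$ is removable, the $m_{jk}$ extend smoothly to $\omega=0$, and the $L^1$ bound on $\omega m_{jk}'$ is obtained by differentiating the explicit rational expressions. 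This is the technical heart of the proof.

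Finally, for $\alpha_j>0$ one has $H_\alpha\ge 0$, so there are no negative eigenvalues and $P=I$, which reduces~\eqref{dispersion-0} to~\eqref{dispersion-00}. It remains to verify that~\eqref{cond-0} holds automatically in this regime. I would do this by induction on $p$ using the recursion: one checks that $g_p(0)\in[0,1)$ for all $p$ and that the leading coefficient of $\omega^{p-1}$ in the Taylor expansion of $f_p$ at $0$ is a polynomial in the $\alpha_l>0$ and $a_l>0$ with strictly positive coefficients, so in particular $\partial_\omega^{p-1}f_p(0)>0$. This yields \eqref{dispersion-00} as an unconditional corollary of the general estimate.
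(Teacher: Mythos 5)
Your outline follows the same strategy as the paper (resolvent via Cramer's rule and the recursion of Lemma \ref{dets}, spectral representation, oscillatory-integral bound with symbols in $L^\infty$ and derivatives in $L^1$, threshold analysis at $\omega=0$ under \eqref{cond-0}), but the step you yourself call the technical heart is asserted rather than proved, and as stated it is not correct term by term. The Cramer numerators do \emph{not} automatically vanish at $\omega=0$ to the same order as $\det D_p$: when the free-term column replaces one of the $p-1$ pairs of columns that coincide at $\omega=0$, column pairing only gives vanishing of order $p-2$ (Lemma \ref{coefs}). The paper recovers the missing order through a nontrivial identity, Lemma \ref{identity-lemma}, stating that the free-term column at $\omega=0$ is itself a linear combination of the paired columns, and, for the contributions coming from the finite segments (internal edges), only suitable \emph{combinations} of coefficients vanish to order $p-1$ (Lemmas \ref{coefsext}--\ref{coefsint}), which forces the regrouping of terms in \eqref{decompositionbis}. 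Without these two ingredients the singularity at $\tau=0$ is not removable in each summand and the van der Corput bound cannot be applied uniformly. Two further points: the projection $P$ removes only the negative eigenvalues, not a zero-energy resonance --- the resonance is excluded precisely by \eqref{cond-0}, not by $P$; and away from $\tau=0$ your claim that the exponentials decay is false on the spectral axis, where $e^{-2i\tau a_l}$ is unimodular, so the absence of zeros of $\det D_p(i\tau)$ for $\tau\neq 0$ requires an argument such as Lemma \ref{infboundaway} (a strip estimate showing the ratio $\det\tilde D_{p-1}/\det D_{p-1}$ has modulus bounded by some $r<1$), not a routine large-$\omega$ remark.

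The verification of \eqref{cond-0} for positive strengths also contains an error: you propose to check $g_p(0)\in[0,1)$, but in fact $g_p(0)=1$ for every $p$ (already $g_1(0)=\alpha_1/\alpha_1=1$), and this is exactly what makes the factor $1-\frac{\alpha_p}{2\omega+\alpha_p}e^{-2\omega a_{p-1}}g_{p-1}(\omega)$ vanish at $\omega=0$ and produces the automatic zero of order $p-1$ of $f_p$. If $g_p(0)<1$ were true, $f_{p+1}(0)$ would be nonzero, contradicting this structure, so your induction as described cannot close. The correct induction (Lemma \ref{ratio} in the paper) tracks the pair of statements $g_p(0)=1$ and $\partial_\omega g_p(0)<0$, the latter being what guarantees, via l'H\^opital applied to the quotient defining $g_p$, that the bracket has a \emph{simple} zero, hence $\partial_\omega^{p-1}f_p(0)\neq 0$ (Lemma \ref{root}). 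Your positivity heuristic for the leading Taylor coefficient (e.g. $\partial_\omega f_2(0)=2a_1+2/\alpha_1+2/\alpha_2$) points in the right direction, but it needs this derivative bookkeeping to become a proof.
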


We first notice that in view of the definition of $f_p(\omega)$, condition \eqref{cond-0} is not fulfilled only in a few explicit situations. For instance, if $p=2$, the situations to be avoided are when $x_2-x_1+\frac{\alpha_1+\alpha_2}{\alpha_1\alpha_2}=0$ already used in \cite{MR2608273}.

In the previous works on dispersive estimates for one or two delta Dirac potentials, given the particular structure of the operator $H_\alpha$,  the authors obtain explicit representations of the resolvent and then of $e^{-itH_\alpha}$. However in the general case of multiple delta interactions an explicit representation is not easy to obtain; even in \cite{MR757115}, \cite[Ch.~II.2]{MR2105735} the resolvent is obtained in terms of the inverse of some matrix $D_{n}$ that depends on $\{\alpha_j\}_{j=1}^p$ and on the lengths of the finite segments $\{x_j-x_{j-1}\}_{j=2}^p$. \\

The line setting might be seen as the special case of the equation posed on a simple graph {with $n$ vertices,} with only two edges starting from any vertex and with delta connection conditions at each vertex ($x_0=-\infty$, $x_{p+1}=\infty$)
\begin{equation}\label{}
\left\{
\begin{array}{ll}
iu_t(t,x)+u_{xx}(t,x)=0, & x\in (x_{j-1},x_j),  j=1,\dots, p,\\[10pt]
u_x(t,x_j^+)-u_x(t,x_j^-)=\alpha_j u(x_j), &t>0,\ j=1,\dots,p.
\end{array}
\right. 
\end{equation}

Our second framework refers to the Dirac's delta Hamiltonian $H^\Gamma_\alpha$ on   tree $\Gamma=(V,E)$ with {a finite number of vertices $V$}, with the external  edges (thats that have only one internal vertex as an endpoint) formed by infinite strips. We consider the linear Schr\"odinger equation in the  case of a  tree $\Gamma$, with delta conditions of non necessarily equal strength at the vertices
\begin{equation}\label{schHdelta}
\left\{\begin{array}{l}
  i\bu _t(t,x)= H^\Gamma_\alpha \bu (t,x) ,  \,(t,x)\in \R\times\Gamma,\\
  \bu(0,x)  =\bu_0(x), \,x\in \Gamma.
    \end{array}\right.
\end{equation}
 The presentation of  operator $H^\Gamma_\alpha$ will be given in full details in Sec. \ref{graphs}. Let us just
say here that $H^\Gamma_\alpha$ acts on a function $\bf u$ on a graph as $-\partial_{xx}$ on each restriction of $\bf u$ on an edge of the tree and that its domain are functions $\bf u$ for which $\delta$-coupling conditions must be fulfilled. The $\delta$-coupling conditions are continuity condition for the function $\bf u$ and a $\delta-$transmission condition at the level of its first derivative at all internal vertices $v$:  $$\sum_{e\in E_v} \partial_n \bu(v)=\alpha(v)\bu(v).$$
The operator $H^\Gamma_\alpha$ shares the same properties of $H_\alpha$ above: only a finite number of negative eigenvalues, and no eigenvalues for positive strengths, and $\sigma_{ac}(H^\Gamma_\alpha)=[0,\infty)$. These properties follow as in \cite[Ch.~II.2]{MR2105735}.

The dispersion inequality for equation \eqref{schHdelta} was proved in \cite{MR2858075} (see also \cite{MR2684310}) for the case of Kirchhoff's connection condition on trees, i.e. $\alpha(v)=0$ for all internal vertices of the  tree. The case of $\delta$ and $\delta'$ coupling on a star shaped tree (i.e. only one vertex) has been considered in \cite{MR2804557}, where the main result concerns the time evolution of a fast soliton for the nonlinear equation, in the spirit of \cite{MR2318852}. Finally, we mention that for the stationary nonlinear equation, the study of bound states on a star shaped tree with delta conditions has been analyzed in a series of papers \cite{adami1}, \cite{adami2}, \cite{adami3}, \cite{adami4}.

The main result of this paper is the following, involving the expression of a determinant function $\det D_p(\omega)$ defined by recursion in Lemma \ref{dets}.
\begin{theo}  \label{disp}
Let us consider a tree $\Gamma=(V,E)$ with $p$ vertices. If the strengths at the vertices and the lengths of the finite edges are such that 
\begin{equation}\label{cond}
\partial_{\omega}^{(p-1)}\det D_p\,_{\scriptscriptstyle{\vert \omega=0}}\neq 0,
\end{equation}
then 
the solution of the linear Schr\"odinger equation on a tree with delta connection conditions  satisfies the
dispersion inequality 
\begin{equation}\label{dispersion}
\| e^{-itH^\Gamma _\alpha}P\bu_0\|_{\LL^\infty(\Gamma)}\leq \frac{C}{\sqrt {|t|}}
\| \bu_0\|_{\LL^1(\Gamma)},\,\,\, \forall t\neq 0.
\end{equation}
Moreover, in case of positive strengths $\alpha_j>0$, condition \eqref{cond} is fulfilled and we have
\begin{equation}\label{dispersion-bis}
\| e^{-itH^\Gamma _\alpha}\bu_0\|_{\LL^\infty(\Gamma)}\leq \frac{C}{\sqrt {|t|}}
\| \bu_0\|_{\LL^1(\Gamma)},\,\,\, \forall t\neq 0.
\end{equation}
\end{theo}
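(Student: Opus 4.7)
The proof will follow the spectral/resolvent route, adapted to the tree geometry and $\delta$-couplings. The starting point is Stone's formula: since $\sigma_{ac}(H^\Gamma_\alpha) = [0,\infty)$ and the point spectrum lies in $(-\infty,0)$, one has, on the continuous subspace,
\begin{equation*}
e^{-itH^\Gamma_\alpha}P\bu_0 \;=\; \frac{1}{2\pi i}\int_0^\infty e^{-it\lambda}\bigl(R(\lambda+i0)-R(\lambda-i0)\bigr)\bu_0\,d\lambda,
\end{equation*}
where $R(z)=(H^\Gamma_\alpha-z)^{-1}$. After the substitution $\lambda=\omega^2$ and parametrizing each edge by arclength, the integral kernel $K_t(x,y)$ of $e^{-itH^\Gamma_\alpha}P$ on a pair of edges $(e,e')$ will be represented as a finite sum of terms of the form
\begin{equation*}
\int_0^\infty e^{-it\omega^2}\, e^{i\omega\varphi_{e,e'}(x,y)}\, m_{e,e'}(\omega)\, \omega\, d\omega,
\end{equation*}
where $\varphi_{e,e'}$ is an affine phase built from distances along the unique path between the two edges, and $m_{e,e'}$ is a rational expression in $e^{i\omega a_j}$ and in the entries of $D_p(i\omega)^{-1}$. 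The goal is to show $|K_t(x,y)|\le C|t|^{-1/2}$ uniformly in $x,y$.

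The first substantive step is to derive an explicit formula for $R(z)$ on the tree. The plan is to invoke Lemma \ref{dets}: one writes the generic resolvent kernel on each edge as a combination of the two fundamental exponentials $e^{\pm\omega(\cdot)}$, imposes continuity and the $\delta$-transmission conditions at every internal vertex, and obtains a linear system whose matrix is essentially $D_p(\omega)$. Solving it yields a kernel in which the quotients $\det \tilde D_p/\det D_p$ (of the type $g_p$ above) appear as multiplicative ``reflection/transmission'' coefficients, multiplied by exponentials $e^{-\omega a_j}$ corresponding to travel across finite segments. Once this is set, each term is brought to the oscillatory form above by setting $\omega\mapsto i\omega$ on the spectral side.

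The heart of the matter is then to show that each multiplier $m_{e,e'}(\omega)$ is bounded on $\mathbb{R}$ and regular enough (e.g. of bounded variation, or a finite sum of $H^1_{\mathrm{loc}}$ functions with integrable derivatives), so that a Van der Corput / stationary phase argument applied to $\int e^{-it\omega^2+i\omega\varphi}m(\omega)\omega\,d\omega$ yields the $|t|^{-1/2}$ bound, uniformly in $\varphi$. For $|\omega|$ large, the exponentials $e^{-\omega a_j}$ and the structure of $D_p$ force $m(\omega)$ to be a bounded rational function of $e^{i\omega a_j}$'s; the non-trivial difference with the Kirchhoff case of \cite{MR2858075} is that $m$ is \emph{not} almost-periodic (the abstract points this out), so Wiener-algebra arguments are unavailable and one must extract decay/variation estimates directly from the recursive definition of $f_p$ and $g_p$. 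For $|\omega|$ small this is exactly where condition \eqref{cond} enters: the resolvent's singularity at $\omega=0$ is controlled by the order of vanishing of $\det D_p(\omega)$, and \eqref{cond} asserts that $\omega=0$ is a zero of order at most $p-1$. Factoring out this zero (and combining with the numerator $\det \tilde D_p$, whose low-order Taylor coefficients are linked to $\det D_p$ by the system it comes from) produces a quotient that is bounded near $0$, which is exactly what the projection $P$ should give — the singularity that would otherwise survive corresponds precisely to a zero-energy resonance/bound state and is killed by $P$.

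The main obstacle, in my view, is this low-frequency analysis: making precise that under \eqref{cond} every multiplier $m_{e,e'}$ extends continuously to $\omega=0$ with the right integrability of $m_{e,e'}'$. The natural way is an induction on $p$, using the recursions for $f_p, g_p$ given before the theorem, to peel off one vertex at a time: each recursion step multiplies by a bounded M\"obius-type factor in $e^{-2\omega a_{p-1}}g_{p-1}$ and adds a pole that is cancelled precisely when the derivative condition \eqref{cond} holds for the smaller tree. Once uniform bounds on $m$ and its derivative are established, a standard Van der Corput lemma (with the phase $-t\omega^2+\omega\varphi$, whose second derivative in $\omega$ is $-2t$) delivers \eqref{dispersion}. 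Finally, for the positive-strength case, the same induction shows that $f_p$, $g_p$ are built from factors with strictly positive real part at $\omega=0$, so \eqref{cond} automatically holds, no eigenvalues exist, $P=\mathrm{Id}$, and \eqref{dispersion-bis} follows.
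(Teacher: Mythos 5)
Your overall architecture is the same as the paper's: represent $e^{-itH^\Gamma_\alpha}P$ through the resolvent along the imaginary axis, compute $R_\omega\bu_0$ edgewise by solving the linear system whose matrix is $D_{\Gamma_p}(\omega)$, exploit the recursion of Lemma \ref{dets}, obtain a positive lower bound for $|\det D_{\Gamma_p}(i\tau)|$ away from $\tau=0$ from the strict ratio bound $|\det\tilde D_{\Gamma_p}/\det D_{\Gamma_p}|<1$ (Lemma \ref{infboundaway}, which is exactly your "extract estimates from the recursion since Wiener-algebra arguments are unavailable"), and conclude by the standard oscillatory estimate \eqref{IBP} applied to multipliers that are bounded with derivative in $L^1$; the positive-strength case is likewise handled by an induction on the ratio, as in the Appendix.

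There is, however, a genuine gap at the heart of the low-frequency step, and it is misattributed in your plan. Under \eqref{cond} the denominator $\det D_{\Gamma_p}$ vanishes at $\omega=0$ exactly to order $p-1$; what makes the quotients bounded near $0$ is that the \emph{numerators} — $\omega\det M^{c_e}_{\Gamma_p}(\omega)$, i.e.\ $D_{\Gamma_p}$ with one column replaced by $\omega T_{\Gamma_p}$, and the coefficients $f_{\lambda,e}$, $f^1_{\lambda,e}+f^2_{\lambda,e}$ — also vanish to order $p-1$. This cancellation is automatic, independent of \eqref{cond} and of $P$: it follows from the structural facts that $D_{\Gamma_p}(0)$ contains $p-1$ pairs of identical columns and that $(\omega T_{\Gamma_p})(0)$ is a linear combination of columns of $D_{\Gamma_p}(0)$ (Lemma \ref{identity-lemma}, then Lemmas \ref{numerator}, \ref{coefsext}, \ref{coefsint}). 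Your plan instead asserts that the surviving singularity "corresponds to a zero-energy resonance and is killed by $P$", and that the pole at each inductive step "is cancelled precisely when \eqref{cond} holds for the smaller tree". Both points are off: $P$ removes only the negative $L^2$ eigenvalues and cannot remove a zero resonance — which is exactly why \eqref{cond} must be assumed — and \eqref{cond} is a single hypothesis on the full tree; the paper neither needs nor proves analogous conditions for subtrees, and for strengths of mixed signs such a cascade need not hold. Without the free-term-column identity, the boundedness of your multipliers $m_{e,e'}$ at $\omega=0$ is unproved, and that is the crux of the theorem. Two smaller omissions of the same kind: on internal edges the needed order of vanishing is obtained only after regrouping the terms as $f^1_{\lambda,e}+f^2_{\lambda,e}$ and $\bigl(e^{i\tau(a_\lambda-y)}-e^{i\tau y}\bigr)f^2_{\lambda,e}$, with bounds uniform in $y$ because the edge has finite length; and the $L^1$ integrability of the derivative at infinity is not mere "bounded variation" but comes from the $1/\tau^2$ decay produced by differentiating the rational entries $i\tau/(i\tau+\alpha)$ of the system, combined with the lower bound of Lemma \ref{infboundaway}.
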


The proof {of Theorem \ref{disp}} uses elements from \cite{MR2049025}, \cite{MR2858075}, \cite{gavrus} in an appropriate way related to the delta connection conditions on the tree. The starting point consists in writing the solution in terms of the resolvent of the Laplacian, which in turn is determined by recursion on the number of vertices. With respect to the previous works with Kirchhoff conditions, the novelty here is that we are not any longer in the framework of the almost periodic Wiener algebra of functions, and that the expression of the resolvent is harder to analyse.

The linear solution $e^{-itH^\Gamma _\alpha}\bu_0$ will be shown to be a combination of oscillatory integrals, that becomes more and more involved as the number of vertices of the tree grows. We do not have any more that $e^{-itH^\Gamma _\alpha}\bu_0$ is a summable superposition of solutions of the linear Schr\"odinger equation on the line, as for Kirchhoff conditions in \cite{MR2858075}.

Theorem \ref{disp-0} follow from Theorem \ref{disp} by considering the particular case of a tree $\Gamma$ with all the internal vertices having degree two.\\

As classically noticed (\cite{Ra,JeKa,JoSoSo,RoSc,GoSc}), one can expect dispersion in absence of eigenvalues and of zero resonances. In the $\delta-$coupling case the non-generic condition \eqref{cond-0} for $p=2$ is precisely in link with the presence of a zero resonance (see formula (2.1.29) of Ch. II.2.1 in \cite{MR2105735}). So one might expect that in the absence of eigenvalues the dispersion holds generically, even for more general coupling. We shall give in  Appendix \ref{gen} some sufficient conditions to obtain dispersion for general couplings.\\

Finally, we note that in the presence of eigenfunctions, the dispersion estimate cannot be valid globally in time.  
Denoting by $H$ either $H_\alpha$ or $H^\Gamma_\alpha$, the general classical $TT^*$ argument and Christ-Kiselev lemma allow to infer global in time Strichartz estimates as on $\mathbb R$  for $e^{-itH}P$ the dispersive part of $e^{-itH} $ (see for instance the short proof of Theorem 2.3 in \cite{MR2233925}). This together with the regularity of the eigenfunctions of the operator $H$ give us the following result:
\begin{theo}\label{strichartz}
Let $T>0$ and let $(q,r)$ and $(q',r')$ be two $1-$admissible couples, i.e. $4\leq q\leq \infty, 2\leq r\leq \infty$ and $\frac 2q+\frac 1r=\frac 12$. For any $\alpha\geq 1$, there exists a constant $C>0$ such that homogeneous Strichartz estimates
$$\|e^{-itH} \bu_0\|_{L^{q}((0,T), L^r(\Gamma))}\leq C(\|\bu_0\|_{L^2(\Gamma)}+T^{1/q}\|\bu_0\|_{L^\alpha(\Gamma)}),$$
and the inhomogeneous Strichartz estimates
$$\|\int_0^t e^{-i(t-s)H} {\bf{F}} (s)\,ds\|_{L^{q}((0,T), L^r(\Gamma))}\leq C(\|{\bf{F}} \|_{L^{\tilde q'}((0,T),L^{\tilde r'}(\Gamma))}+
T^{1/q}\|{\bf{F}} \|_{L^{ 1}((0,T),L^\alpha(\Gamma))}),$$
hold. Here $x'$ stands for the conjugate of $x$, defined by $\frac 1x+\frac 1{x'}=1$.
\end{theo}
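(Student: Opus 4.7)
The plan is to split the evolution via the spectral decomposition $I = P + P_e$, where $P_e$ projects onto the finite-dimensional subspace spanned by the (at most finitely many) negative eigenfunctions of $H$ and $P$ is the projection onto the absolutely continuous part. I would estimate $e^{-itH}P\bu_0$ by the classical abstract machinery and handle $e^{-itH}P_e\bu_0$ by direct computation, noting that the two pieces of $\bu_0$ are controlled in $L^2$ and $L^\alpha$ respectively by the Sobolev/spectral properties of the eigenfunctions.

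First I would derive the Strichartz estimates for the dispersive part. The bounds $\|e^{-itH}P\bu_0\|_{L^2(\Gamma)}\le \|\bu_0\|_{L^2(\Gamma)}$ (from selfadjointness) and $\|e^{-itH}P\bu_0\|_{L^\infty(\Gamma)}\le C|t|^{-1/2}\|\bu_0\|_{L^1(\Gamma)}$ (which is precisely \eqref{dispersion} for the tree, or \eqref{dispersion-0} for the line) put us in exactly the same setting as the free Schr\"odinger propagator on $\R$. The standard $TT^*$ argument therefore yields the homogeneous estimate
\[
\|e^{-itH}P\bu_0\|_{L^{q}(\R, L^r(\Gamma))}\le C\|\bu_0\|_{L^2(\Gamma)},
\]
and Christ--Kiselev gives the inhomogeneous version
\[
\Bigl\|\int_0^t e^{-i(t-s)H}P\,{\bf F}(s)\,ds\Bigr\|_{L^{q}(\R,L^r(\Gamma))}\le C\|{\bf F}\|_{L^{\tilde q'}(\R,L^{\tilde r'}(\Gamma))},
\]
for every $1$-admissible pairs $(q,r)$ and $(\tilde q,\tilde r)$. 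Restricting to $(0,T)$ is immediate.

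Next I would treat the bound-state part. Since $H$ has only finitely many negative simple eigenvalues $\{\la_k\}_{k=1}^N$ with normalized eigenfunctions $\{\phi_k\}$, we have $P_e\bu_0 = \sum_{k=1}^N \langle \bu_0,\phi_k\rangle\,\phi_k$ and
\[
e^{-itH}P_e\bu_0 = \sum_{k=1}^N e^{-it\la_k}\langle \bu_0,\phi_k\rangle\,\phi_k .
\]
On every edge of $\Gamma$, $\phi_k$ solves $-\phi_k''=\la_k\phi_k$ with $\la_k<0$, so it is a linear combination of $e^{\pm\sqrt{|\la_k|}x}$; the $L^2(\Gamma)$ condition forces only the decaying mode on each infinite edge, hence $\phi_k\in L^p(\Gamma)$ for every $p\in[1,\infty]$. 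In particular $\|\phi_k\|_{L^r(\Gamma)}+\|\phi_k\|_{L^{\alpha'}(\Gamma)}\le C$ for all $k$. Hölder then gives
\[
\|e^{-itH}P_e\bu_0\|_{L^r(\Gamma)}\le \sum_{k=1}^N \|\phi_k\|_{L^{\alpha'}}\,\|\bu_0\|_{L^{\alpha}(\Gamma)}\,\|\phi_k\|_{L^r(\Gamma)}\le C\|\bu_0\|_{L^\alpha(\Gamma)},
\]
uniformly in $t$, and integrating the $q$-th power over $(0,T)$ costs only $T^{1/q}$. Adding the two contributions proves the homogeneous estimate. For the inhomogeneous one I would apply the same pointwise-in-time bound to ${\bf F}(s)$ and then use Minkowski to obtain
\[
\Bigl\|\int_0^t e^{-i(t-s)H}P_e {\bf F}(s)\,ds\Bigr\|_{L^\infty_t L^r_x}\le C\|{\bf F}\|_{L^1((0,T),L^\alpha(\Gamma))},
\]
which after an $L^q_t$ integration on $(0,T)$ yields the $T^{1/q}\|{\bf F}\|_{L^1 L^\alpha}$ term.

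The only nontrivial point is the $L^\infty$-regularity of the eigenfunctions, and this is essentially automatic from the ODE structure on each edge together with the $\delta$-coupling conditions at the vertices: on finite edges $\phi_k$ is bounded since it is a smooth exponential combination on a compact interval, and on the infinite external edges only the exponentially decaying mode survives. Once this is in hand, the remainder of the argument is the standard $TT^*$/Christ--Kiselev/Hölder package, and combining the dispersive and eigenfunction estimates produces the stated bounds.
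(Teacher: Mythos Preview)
Your proposal is correct and follows essentially the same approach as the paper: spectral splitting $I=P+P_e$, classical $TT^*$/Christ--Kiselev for the dispersive part $e^{-itH}P$ using \eqref{dispersion} and $L^2$-conservation, and direct H\"older/Minkowski estimates for the bound-state part using that each eigenfunction lies in every $L^p(\Gamma)$ (exponential decay on infinite edges, boundedness on finite ones). The paper's argument in Appendix~\ref{appStrich} is organized in the same way, with the same ingredients and the same justification for the $L^p$-regularity of the eigenfunctions.
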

We shall give in Appendix \ref{appStrich} a proof inspired from \cite{MR2560311}. 
As a typical result for the nonlinear Schr\"odinger equation based on the Strichartz estimates one obtains the global in time wellposedness for subcritical $L^2(\Gamma)$ solutions: 
\begin{theo}\label{tree.nse}
 Let $p\in (0,4)$. For any $\bu_0\in \LL^2(\Gamma)$ there exists a unique solution
$$\displaystyle \bu\in C(\R,\LL^2(\Gamma))\cap \bigcap _{(q,r) 1-adm.} \LL^{q}(\R,\LL^r(\Gamma)),$$
of the nonlinear Schr\"odinger equation
\begin{equation}\label{eq.tree.non}
\left\{
\begin{array}{ll}
i\bu_t +H \bu\pm|\bu|^{p}\bu=0,& t\neq 0,\\[10pt]
 \bu(0)=\bu_0,& t=0.
\end{array}
\right.
\end{equation}
Moreover, the $\LL^2(\Gamma)$-norm of $\bu$ is conserved along the time
$$\|\bu(t)\|_{\LL^2(\Gamma)}=\|\bu_0\|_{\LL^2(\Gamma)}.$$
\end{theo}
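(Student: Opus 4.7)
The plan is to prove Theorem~\ref{tree.nse} by a standard Banach contraction argument in a Strichartz-type space, using Theorem~\ref{strichartz} as the main linear ingredient, and then to globalize via conservation of the $\LL^2(\Gamma)$ norm. The subcritical hypothesis $p<4$ enters only through H\"older in time, producing a positive power of $T$ in the nonlinear estimate.

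First I would fix the $1$-admissible pair $(q,r)=(4(p+2)/p,\,p+2)$; its dual satisfies $(p+1)q'<q$ exactly when $p<4$, and $\||\bu|^p\bu\|_{\LL^{r'}_x}=\|\bu\|_{\LL^r_x}^{p+1}$. Work in the complete metric space
\begin{equation*}
X_T:=\bigl\{\bu\in C([-T,T];\LL^2(\Gamma))\cap \LL^q((-T,T);\LL^r(\Gamma))\ :\ \|\bu\|_{\LL^\infty_t\LL^2_x}+\|\bu\|_{\LL^q_t\LL^r_x}\leq M\bigr\}
\end{equation*}
endowed with the natural distance, and consider the Duhamel map
\begin{equation*}
\Phi(\bu)(t):=e^{-itH}\bu_0\mp i\int_0^t e^{-i(t-s)H}(|\bu|^p\bu)(s)\,ds.
\end{equation*}
Applying Theorem~\ref{strichartz} -- with $\alpha=2$ in the homogeneous estimate and $\alpha=(p+2)/(p+1)=r'$ in the inhomogeneous one, so that $\||\bu|^p\bu\|_{\LL^\alpha_x}=\|\bu\|_{\LL^r_x}^{p+1}$ -- together with H\"older in time yields
\begin{equation*}
\|\Phi(\bu)\|_{X_T}\leq C(1+T^{1/q})\|\bu_0\|_{\LL^2(\Gamma)}+CT^{\theta}\|\bu\|_{X_T}^{p+1}
\end{equation*}
for some $\theta=\theta(p)>0$, and the elementary inequality $\bigl||a|^pa-|b|^pb\bigr|\lesssim(|a|^p+|b|^p)|a-b|$ gives the analogous bound for $\Phi(\bu)-\Phi(\bv)$. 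Choosing $M\sim \|\bu_0\|_{\LL^2(\Gamma)}$ and $T$ small enough (depending only on $\|\bu_0\|_{\LL^2(\Gamma)}$) makes $\Phi$ a strict contraction on $X_T$, and its fixed point is the sought local solution; a final application of the inhomogeneous Strichartz estimate with other $1$-admissible pairs places it in every $\LL^{q_1}_t\LL^{r_1}_x$ appearing in the statement.

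To globalize, I would invoke the self-adjointness of $H$ on $\LL^2(\Gamma)$ recalled in Section~1, together with the gauge invariance of the nonlinearity: multiplying the equation by $\overline{\bu}$, integrating over $\Gamma$ and taking the imaginary part yields $\tfrac{d}{dt}\|\bu(t)\|_{\LL^2(\Gamma)}^2=0$ for smooth enough solutions, and a standard density and regularization argument, exploiting the continuity of the linear flow on $\LL^2(\Gamma)$ provided by Theorem~\ref{strichartz}, transfers the conservation to the $\LL^2$ solution. Since the local existence time depends only on $\|\bu_0\|_{\LL^2(\Gamma)}$, the conservation permits the iteration of the construction on successive intervals, and the solution extends to all of $\R$.

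The main subtlety in comparison with the classical argument on $\R$ is the correction terms $T^{1/q}\|\cdot\|_{\LL^\alpha}$ in Theorem~\ref{strichartz}, which are due to the finitely many eigenfunctions of $H$. The choices $\alpha=2$ and $\alpha=r'$ keep them harmless: on the one hand they come with a positive power of $T$, and on the other they match the natural norms on which the nonlinearity closes, so that the subcritical smallness needed for the contraction is preserved throughout the iteration on any bounded time interval.
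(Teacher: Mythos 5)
Your proposal is correct and follows essentially the same route as the paper, which simply invokes the classical fixed point argument in Strichartz spaces (as in Proposition 3.15 of Tao's book) based on Theorem \ref{strichartz}, and globalizes via the conservation of the $\LL^2(\Gamma)$-norm obtained by multiplying the equation by $\overline{\bu}$ and taking imaginary parts. Your write-up merely makes explicit the choice of the admissible pair, the exponents $\alpha$ absorbing the $T^{1/q}$ correction terms, and the subcritical Hölder step, all of which are consistent with the paper's sketch.
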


Local in time existence with lifespan depending on the $L^2$ size of the initial data follows from a classical fixed point argument as on $\mathbb R$ (see for instance Proposition 3.15 in \cite{MR2233925}). The extension to global solutions is obtained from the conservation of the $L^2(\Gamma)$-norm that in turn follows by taking the imaginary part of the equation \eqref{eq.tree.non} multiplied by $\overline \bu$, and integrating on $\Gamma$.\\

%

The paper is organized as follows. In the next section we introduce the framework of the Laplacian analysis on a graph. In \S 3 we give the proof of Theorem \ref{disp}. In the first Appendix we show how the conditions of the theorems are fulfilled for positive strengths of interactions. The second Appendix contain the proof of Theorem \ref{strichartz}. In the last Appendix we shall describe the approach for general coupling conditions.\\

{\bf {Aknowledgements.}} The authors are grateful to the referee for the remarks and questions that improved the presentation of this paper.

\section{Preliminaries on graphs and $\delta$-coupling}\label{graphs}

In this section we present some generalities about metric graphs and introduce the Dirac's delta Hamiltonian  $H^\Gamma_\alpha$
on such structure.  
More general type of self-adjoint operators, $\Delta(A,B)$, \,have been considered in \cite{MR2277618}, \cite{MR1671833}. 
We collect here some basic facts on metric graphs and on some operators that could be defined on such structure
 \cite{MR2459876}, \cite{MR2042548}, \cite{MR2148631}, \cite{MR2277618}, \cite{2006AdPhy..55..527G}, \cite{MR2883421}.

Let $\Gamma=(V,E)$ be a graph where $V$ is a set of vertices and $E$ the set of edges. 
For each $v\in V$ we denote by $E_v=\{e\in E: v\in e\}$ the set of edges branching from $v$.
We assume that  $V$ is connected and 
the degree of each vertex $v$ of $\Gamma$ is finite:
$d(v)=|E_v|<\infty.$
The edges could be of finite length and then their ends are vertices of $V$ or they have infinite length and then we assume that each infinite edge is a ray with a single vertex belonging to $V$ (see \cite{MR2459876} for more details on graphs with infinite edges). The vertices are called internal if $d(v)\geq 2$ or external if $d(v)=1$. 
In this paper we will assume that there are not external vertices.

We fix an orientation of $\Gamma$ and for each oriented edge $e$, we denote by $I(e)$ the initial vertex and by $T(e)$ the terminal one. Of course in the case of infinite edges we have only initial vertices.

We identify every edge $e$ of $\Gamma$ with an interval $I_e$, where $I_e=[0,l_e]$ if the edge is finite and $I_e=[0,\infty)$ if the edge is infinite. This identification introduces a coordinate $x_e$ along the edge $e$. In this way $\Gamma$ is a metric space and is often named metric graph  \cite{MR2459876}. 

Let $v$ be a vertex of $V$ and $e$ be an edge in  $E_v$. We set for finite edges $e$
$$j(v,e)=\left\{
\begin{array}{lll}
0& \text{if} &v=I(e), \\[10pt]
l_e& \text{if} & v=T(e) 
\end{array}
\right.
$$
and
$$j(v,e)=0,\ \text{if}\ v=I(e)$$
for infinite edges.

We identify any function $\bu$ on $\Gamma$ with a collection $\{\ue\}_{e\in E}$ of functions $\ue$ defined on the edges  $e$ of $\Gamma$. Each $\ue$ can be considered as a function on the interval $I_e$. In fact, we use the same notation $\ue$ for both the function on the edge $e$ and the function on the interval $I_e$ identified with $e$.
For a function $\bu:\Gamma\rightarrow \cn$,  $\bu=\{u^e\}_{e\in E}$,  we denote by $f(\bu):\Gamma\rightarrow \cn$ the family 
$\{f(u^e)\}_{e\in E}$, where  $f(u^e):e\rightarrow\cn$.

A function $\bu=\{\ue\}_{e\in E}$ is continuous if and only if $\ue$ is continuous on $I_e$ for every $e\in E$, and moreover, is continuous at the vertices of $\Gamma$:
$$\ue(j(v,e))=u^{e'}(j(v,e')), \quad \forall \ e,e'\in E_v,{\quad \forall \ v\in V.}$$ 

The space $\LL^p(\Gamma)$, $1\leq p<\infty$ consists of all functions   $\bu=\{u_e\}_{e\in E}$ on $\Gamma$ that belong to $\LL^p(I_e)$
for each edge $e\in E$ and 
$$\|\bu\|_{\LL^p(\Gamma)}^p=\sum _{e\in E}\|u^e\|_{\LL^p(I_e)}^p<\infty.$$
Similarly, the space $\LL^\infty(\Gamma)$ consists of all functions that belong to $\LL^\infty(I_e)$ for each edge $e\in E$ and
$$\|\bu \|_{\LL^\infty(\Gamma)}=\sup _{e\in E}\|u^e\|_{\LL^\infty(I_e)}<\infty.$$
The Sobolev space $H^m(\Gamma)$, $m\geq 1$ an integer, consists in all continuous functions on $\Gamma$ that  belong to
$H^m(I_e)$ for each $e\in E$ and 
$$\|\bu \|_{H^m(\Gamma)}^2=\sum _{e\in E}\|u^e\|_{H^m(e)}^2<\infty.$$
The above spaces are  Hilbert spaces with the inner products
$$(\bu,\bv)_{\LL^2(\Gamma)}=\sum _{e\in E}(\ue,\ve)_{\LL^2(I_e)}=\sum _{e\in E}\int _{I_e}\ue(x)\overline{\ve}(x)dx$$
and
$$(\bu,\bv)_{H^m(\Gamma)}=\sum _{e\in E}(\ue,\ve)_{H^m(I_e)}= \sum _{e\in E}\sum _{k=0}^m\int _{I_e} \frac{d^k\ue}{dx^k} \overline{\frac{d^k\ve}{dx^k} }dx.$$
We now define the normal exterior derivative of a function $\bu=\{u^e\}_{e\in E}$  at the endpoints of the edges.
For each $e\in E$ and $v$ an endpoint of $e$ we consider the normal derivative of the restriction of $\bu$ to the edge $e$ of $E_v$ evaluated at $j(v,e)$ to be defined by:
$$\frac {\partial u^e}{\partial n_e}(j(v,e))=
\left\{
\begin{array}{lll}
-u_x^e(0^+)&\text{if}& j(v,e)=0, \\[10pt]
u_x^e(l_e^-)& \text{if}&j(v,e)=l_e  .
\end{array}
\right.
$$

We now introduce $H^\Gamma_\alpha$. It generalizes the classical Dirac's delta interactions with strength  parameters \eqref{Halpha}.
The Dirac's delta Hamiltonian is defined on the domain
\begin{equation}\label{domHdelta}
D(H^\Gamma_\alpha)=\Big\{\bu\in H^2(\Gamma),\  \sum _{e\in E_v} \frac {\partial u^e}{\partial n_e}(j(v,e)) =  \alpha (v) \bu(v), \quad \forall v\in V \Big \}.
\end{equation}
Operator $H^\Gamma_\alpha$ acts as following, for any $\bu=\{u^e\}_{e\in E}$ 
$$(H^\Gamma_\alpha \bu)(x)=-u_{xx}^e(x),\quad x\in I_e,\ e\in E.$$
The quadratic form associated to $H^\Gamma_\alpha$ is defined on $H^1(\Gamma)$ and it is given by
$$\mathcal{E}^\Gamma_\alpha (u)= \sum _{e\in E}\int _{I_e}|\ue_x(x)|^2dx +\sum _{v\in V} \alpha(v)|\bu(v)|^2.$$
The case when all strengths vanish corresponds to the Kirchhoff coupling analyzed in \cite{MR2858075}.

Finally, let us mention that there are other  coupling conditions (see \cite{MR1671833}) which allow to define a ``Laplace" operator on a metric graph. To be more precise, let us consider an operator that acts on functions on the graph $\Gamma$ as the second derivative $\frac {d^2}{dx^2}$, and its domain consists in all functions $\bf{u}$ that belong to the Sobolev space
$H^2(e)$ on each edge $e$ of $\Gamma$ and satisfy the following boundary condition at the vertices:
\begin{equation}\label{con.1}
A(v){ \bf {u}} (v)+B(v){\bf {u}}'(v)=0 \quad \text{for each vertex} \ v.
\end{equation}
Here ${\bf {u}}(v)$ and ${\bf {u}}'(v)$ are correspondingly the vector of values of $\bf{u}$ at $v$ attained from directions of different edges converging at $v$ and the vector of derivatives at $v$ in the outgoing directions.
For each vertex $v$ of the tree we assume that  matrices $A(v)$ and $B(v)$ are of size $d(v)$ and satisfy the following two conditions
\begin{enumerate}
\item the joint matrix $(A(v), B(v))$ has maximal rank, i.e.  $d(v)$,\\
\item $A(v)B(v)^T=B(v)A(v)^T$. 
\end{enumerate} 

Under those assumptions it has been proved in \cite{MR1671833} that the considered operator, denoted by $\Delta(A,B)$, is self-adjoint. The case considered in this paper, the $\delta$-coupling, corresponds to the matrices
$$A(v)=\left(\begin{array}{cccccc}1 & -1 & 0 & \dots & 0 & 0 \\0 & 1 & -1 & \dots & 0 & 0 \\0 & 0 & 1 & \dots & 0 & 0 \\\vdots & \vdots & \vdots &  & \vdots & \vdots \\0 & 0 & 0 & \vdots & 1 & -1 \\0 & 0 & 0 & \vdots & 0 & -\alpha(v)\end{array}\right),\ 
B(v)=\left(\begin{array}{cccccc}0 & 0 & 0 & \dots & 0 & 0 \\0 & 0 & 0 & \dots & 0 & 0 \\0 & 0 & 0 & \dots & 0 & 0 \\\vdots & \vdots & \vdots &  & \vdots & \vdots \\0 & 0 & 0 & \dots & 0 & 0 \\1 & 1 & 1 & \dots & 1 & 1\end{array}\right).$$
More examples of matrices satisfying the above conditions are given in \cite{MR1671833, MR2459885}.

\section{Proof of Theorem \ref{disp}}\label{const}

We shall use a description of the solution of the linear Schr\"odinger equation in terms of the resolvent. 
For $\omega>0$ such that $-\omega^2$ is not an eigenvalue, let $R_\omega$ 
be the resolvent of the Laplacian on a tree
$$R_{\omega}{\bf u_0}=(H^\Gamma_\alpha+\omega ^2I)^{-1}{\bf u_0}.$$ 

Before starting let us  choose an orientation on  tree $\Gamma$. Let us choose an internal vertex $\mathcal{O}$. This will be the root of the tree and the initial 
vertex for all the edges that branch from it. This procedure introduces an orientation for all the edges staring from $\mathcal{O}$. 
For the other endpoints of the edges belonging to $E_\mathcal{O}$   we repeat
the above procedure and inductively we construct an orientation on $\Gamma$.

\subsection{The structure of the resolvent}\label{sect:res}

In order to obtain the expression of the resolvent second-order equations 
$$(R_\omega \bold u_0)''=\omega^2 R_\omega \bold u_0-\bold u_0$$ 
must be solved on each edge of the tree together with coupling conditions at each vertex. Then, on each edge parametrized by $I_e$, for $x\in I_e$, since $\omega\neq 0$,
\begin{equation}\label{res}
R_\omega \bold u_0(x)=c_e\,e^{\omega x}+\tilde c_e\,e^{-\omega
x}+\frac{t_e(x,\omega)}{\omega}, 
\end{equation}
with
$$t_e(x,\omega)=\frac{1}{2}\int_{I_e}\bold u_0(y)\,e^{-\omega \mo{x-y}}dy.$$

Since $R_\omega \bold u_0$ belongs to $\LL^2(\Gamma)$ the coefficients $c$'s are zero on the  infinite edges $e\in\mathcal E$, parametrized by $[0,\infty)$. If we denote by $\mathcal I$ the set of internal edges, we have $2|\mathcal I|+|\mathcal E|$ coefficients. The delta conditions of continuity of $R_\omega \bold u_0$ and 
of transmission of $(R_\omega \bold u_0)'$ at the vertices of the tree give the system of 
equations on the coefficients. We have the same number of equations as the number of unknowns. We denote $D_{\Gamma_p}(\omega)$ the matrix of the system, where $p$ stands for the number of vertices of the tree, and by $T_{\Gamma_p}(\omega)$ the column of the free terms in the system.

Therefore the resolvent $R_\omega \bold u_0(x)$ on an edge $I_e$ is   
\begin{equation}\label{ressum}
R_\omega \bold u_0(x)=\frac{\det M^{c_e}_{\Gamma_p}(\omega)}{\det D_{\Gamma_p}(\omega)}\,e^{\omega x}
+\frac{\det M^{\tilde c_e}_{\Gamma_p}(\omega)}{\det D_{\Gamma_p}(\omega)}\,e^{-\omega x}+\frac{t_e(x,\omega)}{\omega},
\end{equation}
where $ M^{c_e}_{\Gamma_p}(\omega)$ and $ M^{\tilde c_e}_{\Gamma_p}(\omega)$ are obtained from $D_{\Gamma_p}(\omega)$ by replacing the column corresponding to the unknown $c_e$, and respectively $\tilde c_e$ by the column of the free terms $T_{\Gamma_p}(\omega)$.



\subsection{The expression of $\det D_{\Gamma_p}(\omega)$}
In view of the form \eqref{res} of the resolvent, we obtain on an edge $I_e$
\begin{equation}\label{deltacond}
R_\omega \bold u_0(0)=c_e+\tilde c_e+\frac{t_e(0,\omega)}{\omega},
\end{equation}
$$(R_\omega \bold u_0)'(0)=c_e\,\omega-\tilde c_e\,\omega+t_e(0,\omega),$$
and in case $I_e$ is parametrized by $[0,a]$ with $a<\infty$,
\begin{equation}\label{deltaconda}
R_\omega \bold u_0(a)=c_e\,e^{\omega a}+\tilde c_e\,e^{-\omega
a}+\frac{t_e(0,\omega)}{\omega},
\end{equation}
$$(R_\omega \bold u_0)'(a)=c_e\,\omega\, e^{\omega a}-\tilde c_e\,\omega\,e^{-\omega
a}-t_e(a,\omega).$$

\subsubsection{The star-shaped tree case}\label{star} In the case of a single vertex and $n_1\geq 2$ edges $I_j$, $1\leq j\leq n_1$, parametrized by $[0,\infty)$ we have only the coefficient $\tilde c_j$ on each edge $I_j$ since all $c_j$ vanish. The delta conditions are continuity of the resolvent at the vertex, together with the fact that the sum of the first derivatives must be equal to $\alpha$ times the value of the resolvent at the vertex
$$(R_\omega \bold u_0)_j(0)=(R_\omega \bold u_0)_1(0), \,\,\sum_{1\leq j\leq n_1}(R_\omega \bold u_0)'_j(0)=\alpha_1 \,(R_\omega \bold u_0)_1(0).$$
From \eqref{deltacond} we obtain as matrix for the system of $\tilde c$'s
$$D_{\Gamma_1}(\omega)=\left(\begin{array}{cccccccc}1 & -1 & &&&&&\\  & 1 & -1&&&&&\\ &&.&.&&&&\\&&&.&.&&&\\&&&&.&.&&\\&&&&&1&-1&\\&&&&&&1&-1\\1&\frac{\omega}{\omega+\alpha_1}&\frac{\omega}{\omega+\alpha_1}&
.&.&\frac{\omega}{\omega+\alpha_1}&\frac{\omega}{\omega+\alpha_1}&\frac{\omega}{\omega+\alpha_1} \end{array}\right),$$
and as a free term column 
$$T_{\Gamma_1}(\omega)=\left(\begin{array}{cccccccc}
\frac{t_2(0,\omega)-t_1(0,\omega)}{\omega}\\
...\\
\frac{t_{n_1}(0,\omega)-t_{n_1-1}(0,\omega)}{\omega}\\
\,\\
\frac{\omega-\alpha_1}{\omega+\alpha_1}\frac{t_1(0,\omega)}{\omega}+\frac{\omega}{\omega+\alpha_1}\sum_{2\leq j\leq n_1}\frac{t_j(0,\omega)}{\omega} \end{array}\right).$$
By developing $\det D_{\Gamma_1}(\omega)$ with respect to its last column, we obtain by recursion that
$$\det D_{\Gamma_1}(\omega)=\frac{n_1\,\omega+\alpha_1}{\omega+\alpha_1}.$$
Thus $\det D_{\Gamma_1}$ does no vanish on the imaginary axis and $\omega R_\omega\bold u_0$ can be analytically continued in a region containing the imaginary axis.

We introduce here the matrix $\tilde D_{\Gamma_1}(\omega)$ which is the matrix of the coefficients of the resolvent, if on the last edge $I_{n_1}$ we should have $c_{n_1}e^{\omega x}$ instead of $\tilde c_{n_1}e^{-\omega x}$. This changes only the $(n_1,n_1)$-entry of $D_{\Gamma_1}(\omega)$ in $-\frac{\omega}{\omega+\alpha_1}$ instead of $\frac{\omega}{\omega+\alpha_1}$,
$$\tilde D_{\Gamma_1}(\omega)=\left(\begin{array}{cccccccc}1 & -1 & &&&&&\\  & 1 & -1&&&&&\\ &&.&.&&&&\\&&&.&.&&&\\&&&&.&.&&\\&&&&&1&-1&\\&&&&&&1&-1\\1&\frac{\omega}{\omega+\alpha_1}&\frac{\omega}{\omega+\alpha_1}&
.&.&\frac{\omega}{\omega+\alpha_1}&\frac{\omega}{\omega+\alpha_1}&-\frac{\omega}{\omega+\alpha_1} \end{array}\right).$$ 
Moreover, the free term column remains the same for this new system. We have again by recursion
$$\det \tilde D_{\Gamma_1}(\omega)=\frac{(n_1-2)\,\omega+\alpha_1}{\omega+\alpha_1}.$$

\subsubsection{The general tree case}
Any tree $\Gamma_p$ with $p$ vertices, $p\geq 2$ can be seen as a tree $\Gamma_{p-1}$ with $p-1$ vertices, to which we add a new vertex on one of its infinite edges, and $n_p-1$ new infinite edges from it. Let us denote by $N$ the number of edges of $\Gamma_{p-1}$. By this transformation $I_N$ becomes an internal edge, parametrized by $[0,a_{p-1}]$, and we have in addition $I_{N+j}$ as external edges, for $1\leq j\leq n_p-1$. We denote $\alpha_{p}$ the  strength of the $\delta$ condition in the new $p^{th}$ vertex.
 The matrix of the new system (unknowns of the $\Gamma_{p-1}$ system, together with an extra-unknown on the new internal line $I_N$, as well as $n_p-1$ unknowns on the new $n_p-1$ external edges) is denoted by $D_{\Gamma_p}(\omega)$. {Notice that if we write the system of unknowns of $\Gamma_p$ by changing the order of the unknowns (i.e. permuting columns) or the order of the conditions at vertices (i.e. permuting lines), then the determinant remains unchanged or it changes sign, and the ratio $\frac{\det \tilde D_{\Gamma_{p}}(\omega)}{\det D_{\Gamma_{p}}(\omega)}$ remains unchanged. }
 
 For $\Gamma_p$, by writing the delta conditions at the end of $I_N$, together with the two  conditions involving the coefficients on $I_N$ at the begining of $I_N$, we obtain the matrix $D_{\Gamma_p}(\omega)$ as
$$\left(\begin{array}{cccc|ccccccccc}
&&&&  & & &&&&&\\
&& {D_{\Gamma_{p-1}}(\omega)}&&& & &&&&&\\
&&&&-1&& & &&&&&\\
&&&&-\frac{\omega}{\omega+\alpha_{p-1}}&& & &&&&&\\ \hline
&&&e^{-\omega a_{p-1}}&e^{\omega a_{p-1}}& -1 & &&&&&\\ 
&&&&&1 & -1  &&&&&\\
&&&&&&.&.&&&&\\&&&&&&&.&.&&\\&&&&&&&&.&.&&\\&&&&&&&&&1&-1&
\\&&&&&&&&&&1&-1\\&&&\frac{-\omega+\alpha_p}{\omega+\alpha_p}e^{-\omega a_{p-1}}&e^{\omega a_{p-1}}&\frac{\omega}{\omega+\alpha_p}&\frac{\omega}{\omega+\alpha_p}&.&.&\frac{\omega}{\omega+\alpha_p}&\frac{\omega}{\omega+\alpha_p}&\frac{\omega}{\omega+\alpha_p}\end{array}\right)$$
and the free term column as
$$T_{\Gamma_p}(\omega)=
\left(\begin{array}{cccccccccccc}
T_{\Gamma_{p-1}}(\omega)\\
\,\\
\frac{t_{N+1}(0,\omega)-t_{N}(a_{p-1},\omega)}{\omega}\\
...\\
\frac{t_{N+n_p-1}(0,\omega)-t_{N+n_p-2}(0,\omega)}{\omega}\\
\,\\
\frac{\omega-\alpha_p}{\omega+\alpha_p}\frac{t_N(a_{p-1},\omega)}{\omega}+\frac{\omega}{\omega+\alpha_p}\sum_{1\leq j\leq n_p-1}\frac{t_{N+j}(0,\omega)}{\omega}\end{array}\right).$$

We point out that $D_{\Gamma_p}$ has $p-1$ pairs of columns that are equals at $\omega=0$. This implies that $\omega=0$ is a zero of order at least $p-1$ for $D_{\Gamma_p}$. The assumption imposed in Theorem \ref{disp-0}  guarantees that  the order of $\omega=0$ is exactly $p-1$.
This will avoid the existence of zero resonances for the resolvent $R_\omega$. In the case when all the strengths $\{\alpha_k\}_{k=1}^{n}$ are positive the condition in Theorem \ref{disp-0} is fulfilled - this will be proved in the first Appendix.

 We shall prove the following Lemma.

\begin{lemma}\label{dets}We have the recursion formulae
$$\det D_{\Gamma_1}(\omega)=\frac{n_1\,\omega+\alpha_1}{\omega+\alpha_1},\,\,\,\frac{\det \tilde D_{\Gamma_{1}}(\omega)}{\det D_{\Gamma_{1}}(\omega)}=\frac{(n_1-2)\,\omega+\alpha_1}{n_1\,\omega+\alpha_1},$$
$$\det D_{\Gamma_p}(\omega)=\frac{n_p\,\omega+\alpha_p}{\omega+\alpha_p}e^{\omega a_{p-1}}\det D_{\Gamma_{p-1}}(\omega)\left(1-\frac{(n_p-2)\,\omega+\alpha_p}{n_p\,\omega+\alpha_p}\,e^{-2\omega a_{p-1}}\frac{\det \tilde D_{\Gamma_{p-1}}(\omega)}{\det D_{\Gamma_{p-1}}(\omega)}\right).$$
$$\frac{\det \tilde D_{\Gamma_{p}}(\omega)}{\det D_{\Gamma_{p}}(\omega)}=\frac{\frac{(n_p-2)\,\omega+\alpha_p}{n_p\,\omega+\alpha_p}-\frac{(n_p-4)\,\omega+\alpha_p}{n_p\,\omega+\alpha_p}\,e^{-2\omega a_{p-1}}\frac{\det \tilde D_{\Gamma_{p-1}}(\omega)}{\det D_{\Gamma_{p-1}}(\omega)}}{1-\frac{(n_p-2)\,\omega+\alpha_p}{n_p\,\omega+\alpha_p}\,e^{-2\omega a_{p-1}}\frac{\det \tilde D_{\Gamma_{p-1}}(\omega)}{\det D_{\Gamma_{p-1}}(\omega)}}.$$
\end{lemma}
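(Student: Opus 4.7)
The proof will proceed by induction on the number of vertices $p$. The key idea is that passing from $\Gamma_{p-1}$ to $\Gamma_p$ amounts to making the external edge $I_N$ of $\Gamma_{p-1}$ finite (of length $a_{p-1}$), thereby introducing a new unknown $c_N$ (previously absent because $I_N$ was infinite), and adjoining $n_p-1$ new external edges at a new vertex $v_p$. The matrix $D_{\Gamma_p}(\omega)$ therefore contains $D_{\Gamma_{p-1}}(\omega)$ as a block, perturbed by one extra column (for $c_N$) and coupled to a new block whose structure mirrors the star-shaped case $\Gamma_1$.

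For the base case $p=1$, the matrix $D_{\Gamma_1}(\omega)$ is upper-bidiagonal on its first $n_1-1$ rows (continuity between consecutive edges) with the flux condition in the last row. Expanding along the last column and iterating yields, after a short induction on $n_1$, the closed form $\det D_{\Gamma_1}(\omega)=\frac{n_1\omega+\alpha_1}{\omega+\alpha_1}$. Since $\tilde D_{\Gamma_1}$ differs from $D_{\Gamma_1}$ only in the sign of the $(n_1,n_1)$ entry, the same expansion replaces $n_1\omega+\alpha_1$ by $(n_1-2)\omega+\alpha_1$, giving the stated ratio.

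For the inductive step, I would evaluate $\det D_{\Gamma_p}(\omega)$ by cofactor expansion along the rows and columns associated with the new vertex $v_p$. The part of the expansion that does not couple the old and new unknowns contributes $\det D_{\Gamma_{p-1}}(\omega)$ multiplied by the star-shaped determinant at $v_p$, which by the base case gives the factor $\frac{n_p\omega+\alpha_p}{\omega+\alpha_p}$; the exponential $e^{\omega a_{p-1}}$ arises from scaling the column of $\tilde c_N$, whose conditions at $v_p$ involve $e^{\pm\omega a_{p-1}}$. The coupling contribution, which routes the unknown $c_N$ back into the $\Gamma_{p-1}$ system, yields precisely $\frac{\det\tilde D_{\Gamma_{p-1}}(\omega)}{\det D_{\Gamma_{p-1}}(\omega)}$, since the term $c_N e^{\omega x}$ plays in the $\Gamma_{p-1}$ block the role that $\tilde c_N e^{-\omega x}$ played before, i.e.\ it reverses the orientation of $I_N$: this is exactly the modification encoded by $\tilde D_{\Gamma_{p-1}}$. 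The prefactor $\frac{(n_p-2)\omega+\alpha_p}{n_p\omega+\alpha_p}$ and the exponential $e^{-2\omega a_{p-1}}$ (the finite edge $I_N$ being traversed twice in the coupling) are read off from the $v_p$ block.

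The recursion for $\frac{\det\tilde D_{\Gamma_p}}{\det D_{\Gamma_p}}$ is derived identically, observing that $\tilde D_{\Gamma_p}$ is obtained from $D_{\Gamma_p}$ by flipping the sign at the entry of the $v_p$ block governing the orientation of the outermost external edge, which replaces $n_p$ by $n_p-2$ and $n_p-2$ by $n_p-4$ in the two constituent factors; the common $\det D_{\Gamma_{p-1}}$ and $e^{\omega a_{p-1}}$ factors cancel in the ratio. The main obstacle, apart from this, will be bookkeeping: tracking how the column for the new unknown $c_N$ perturbs the two interface rows of $D_{\Gamma_{p-1}}$ (continuity and flux at $v_{p-1}$), and how the exponentials $e^{\pm\omega a_{p-1}}$ factor out cleanly once the $c_N$ and $\tilde c_N$ columns are appropriately scaled. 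Once this is organized, the claimed identities reduce to standard cofactor manipulations.
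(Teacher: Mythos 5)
Your proposal is correct and follows essentially the same route as the paper: induction on $p$, with the base case obtained by expanding the star-shaped system along its last column, and the inductive step obtained by a Laplace expansion along the rows of the new vertex block, recognizing that the coupling through the now-finite edge $I_N$ replaces $\tilde c_N e^{-\omega x}$ by $c_N e^{\omega x}$ and is therefore encoded by $\det \tilde D_{\Gamma_{p-1}}$, while the new-vertex block contributes the factors $\frac{n_p\,\omega+\alpha_p}{\omega+\alpha_p}e^{\omega a_{p-1}}$ and $\frac{(n_p-2)\,\omega+\alpha_p}{\omega+\alpha_p}e^{-\omega a_{p-1}}$. The only detail left implicit in your sketch, which the paper spells out, is the verification that all other terms of the expansion vanish (dropping both junction columns forces a block whose $D_{\Gamma_{p-1}}$ part has a zero column).
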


\begin{proof}
The part on $\Gamma_1$ was proved in subsection \S\ref{star}.

By developing $\det D_{\Gamma_p}$ with respect to the last $n_p$ lines we obtain an alternated sum of determinants of $n_p\times n_p$ minors composed from the last $n_p$ lines of $D_{\Gamma_p}$ times the determinant of the matrix $D_{\Gamma_p}$ without the lines and columns the minor is made of. On the last $n_p$ lines, there are only $n_p+1$ columns that does  not identically vanish. 
The only possibility to obtain a $n_p\times n_p$ minor composed from the last $n_p$ lines of $D_{\Gamma_p}$ with determinant different from zero is to choose  all last $n_p-1$ columns together with a previous one. This follows from the fact that if we eliminate from $\det D_{\Gamma_n}$  both previous columns together with $n_p-2$ columns among the last $n_p$ columns, we obtain a block-diagonal type matrix, with first diagonal block $D_{\Gamma_{p-1}}$ with its last column replaced by zeros, so its determinant vanishes. Therefore
$$\det D_{\Gamma_p}=\det D_{\Gamma_{p-1}}\det A^{n_p}-\det \tilde D_{\Gamma_{p-1}}\det B^{n_p},$$
where for $m\geq 1$, $A^m$ and $B^m $ are the $m\times m$ matrices 
$$A^{m}=\left(\begin{array}{ccccccccc}e^{\omega a_{p-1}}& -1 & &&&&&\\&1 & -1 & &&&&&\\ &&.&.&&&&\\&&&.&.&&&\\&&&&.&.&&\\&&&&&1&-1&\\&&&&&&1&-1
\\e^{\omega a_{p-1}}&\frac{\omega}{\omega+\alpha_p}&\frac{\omega}{\omega+\alpha_p}&.&.&\frac{\omega}{\omega+\alpha_p}&\frac{\omega}{\omega+\alpha_p}&\frac{\omega}{\omega+\alpha_p} \end{array}\right),$$
$$B^{m}=\left(\begin{array}{ccccccccc}e^{-\omega a_{p-1}}& -1 & &&&&&\\&1 & -1 & &&&&& \\ &&.&.&&&&\\&&&.&.&&&\\&&&&.&.&&\\&&&&&1&-1&\\&&&&&&1&-1
\\\frac{-\omega+\alpha_p}{\omega+\alpha_p}e^{-\omega a_{p-1}}&\frac{\omega}{\omega+\alpha_p}&\frac{\omega}{\omega+\alpha_p}&.&.&\frac{\omega}{\omega+\alpha_p}&\frac{\omega}{\omega+\alpha_p}&\frac{\omega}{\omega+\alpha_p} \end{array}\right).$$
We have 
$$\det A^2=\frac{2\omega+\alpha_p}{\omega+\alpha_p}e^{\omega a_{p-1}},$$ and by developing $A^{m}$ with respect to the first last column we obtain the recursion formula $\det A^m=\frac{\omega}{\omega+\alpha_p} e^{\omega a_{p-1}}+\det A^{n_p-1},$ so
$$\det A^m=\frac{m\,\omega+\alpha_p}{\omega+\alpha_p}e^{\omega a_{p-1}}.$$
Similarly we obtain 
$$\det B^m=\frac{(m-2)\,\omega+\alpha_p}{\omega+\alpha_p}e^{-\omega a_{p-1}}.$$
Therefore we find indeed
$$\det D_{\Gamma_p}(\omega)=\frac{n_p\,\omega+\alpha_p}{\omega+\alpha_p}e^{\omega a_{p-1}}\det D_{\Gamma_{p-1}}(\omega)\left(1-\frac{(n_p-2)\,\omega+\alpha_p}{n_p\,\omega+\alpha_p}\,e^{-2\omega a_{p-1}}\frac{\det \tilde D_{\Gamma_{p-1}}(\omega)}{\det D_{\Gamma_{p-1}}(\omega)}\right).$$

In a similar way we get
$$\det \tilde D_{\Gamma_p}(\omega)=\frac{(n_p-2)\,\omega+\alpha_p}{\omega+\alpha_p}e^{\omega a_{p-1}}\det D_{\Gamma_{p-1}}(\omega)-\frac{(n_p-4)\,\omega+\alpha_p}{\omega+\alpha_p}e^{-\omega a_{p-1}}\det \tilde D_{\Gamma_{p-1}}(\omega),$$
so
$$\frac{\det \tilde D_{\Gamma_{p}}(\omega)}{\det D_{\Gamma_{p}}(\omega)}=\frac{\frac{(n_p-2)\,\omega+\alpha_p}{n_p\,\omega+\alpha_p}-\frac{(n_p-4)\,\omega+\alpha_p}{n_p\,\omega+\alpha_p}\,e^{-2\omega a_{p-1}}\frac{\det \tilde D_{\Gamma_{p-1}}(\omega)}{\det D_{\Gamma_{p-1}}(\omega)}}{1-\frac{(n_p-2)\,\omega+\alpha_p}{n_p\,\omega+\alpha_p}\,e^{-2\omega a_{p-1}}\frac{\det \tilde D_{\Gamma_{p-1}}(\omega)}{\det D_{\Gamma_{p-1}}(\omega)}},$$
and the proof of the Lemma is complete.
\end{proof}

\subsection{A lower bound for $\det D_{\Gamma_p}(i\tau)$ away from $0$}

\begin{lemma}\label{infboundaway}Function $\det D_{\Gamma_p}(\omega)$ is lower bounded by a positive constant on a strip containing the imaginary axis, away from zero: 
$$\forall \delta>0,\,\exists c_{\Gamma_p},\epsilon_{\Gamma_p}>0,\exists 0<r_{\Gamma_p}<1\, \text{s.t.}\,|\det D_{\Gamma_p}(\omega)|>c_{\Gamma_p},\,\left|\frac{\det \tilde D_{\Gamma_p}(\omega)}{\det D_{\Gamma_p}(\omega)}\right|<r_{\Gamma},$$
for all $\omega\in\mathbb C$ with $|\Re\omega|<\epsilon_{\Gamma_p}$ and $ |\Im\omega|>\delta.$
\end{lemma}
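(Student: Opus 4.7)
The plan is to proceed by induction on the number of vertices $p$, using the recursion formulas from Lemma \ref{dets}. For the base case $p=1$, the explicit formulas $\det D_{\Gamma_1}(\omega)=(n_1\omega+\alpha_1)/(\omega+\alpha_1)$ and $(\det\tilde D_{\Gamma_1}/\det D_{\Gamma_1})(\omega)=((n_1-2)\omega+\alpha_1)/(n_1\omega+\alpha_1)$ can be evaluated directly on the imaginary axis: the squared modulus $(n_1^2\tau^2+\alpha_1^2)/(\tau^2+\alpha_1^2)$ is non-decreasing in $\tau^2$ and bounded below by $\min(1,n_1^2)>0$, while $((n_1-2)^2\tau^2+\alpha_1^2)/(n_1^2\tau^2+\alpha_1^2)$ is non-increasing in $\tau^2$ and strictly less than $1$ for $|\tau|\geq\delta$. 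Since the poles and zeros of these rational functions all lie on the real axis, the bounds extend by continuity to a thin complex strip $|\Re\omega|<\epsilon_{\Gamma_1}$.

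For the inductive step, assume the lemma for $\Gamma_{p-1}$ and introduce the shorthand
$$A(\omega)=\frac{(n_p-2)\omega+\alpha_p}{n_p\omega+\alpha_p},\quad A'(\omega)=\frac{(n_p-4)\omega+\alpha_p}{n_p\omega+\alpha_p},\quad q(\omega)=e^{-2\omega a_{p-1}},\quad z(\omega)=\frac{\det\tilde D_{\Gamma_{p-1}}}{\det D_{\Gamma_{p-1}}}.$$
With this notation, the recursion of Lemma \ref{dets} reads
$$\det D_{\Gamma_p}=\frac{n_p\omega+\alpha_p}{\omega+\alpha_p}\,e^{\omega a_{p-1}}\det D_{\Gamma_{p-1}}\,(1-Aqz),\qquad\frac{\det\tilde D_{\Gamma_p}}{\det D_{\Gamma_p}}=\frac{A-A'qz}{1-Aqz}.$$
In a small strip, $|(n_p\omega+\alpha_p)/(\omega+\alpha_p)|$ and $|e^{\omega a_{p-1}}|$ are bounded above and below by positive constants (the singularities of the first being real, the second being purely modulated by $\Re\omega$), and by induction $|\det D_{\Gamma_{p-1}}|\geq c_{\Gamma_{p-1}}$. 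Hence the lower bound on $|\det D_{\Gamma_p}|$ reduces to a lower bound on $|1-Aqz|$, which will follow automatically from the upper bound on the ratio established below.

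The crucial ingredient is the bound $|(A-A'qz)/(1-Aqz)|<r_{\Gamma_p}<1$. The key algebraic identity, obtained by direct expansion, is $A(\omega)^2-A'(\omega)=4\omega^2/(n_p\omega+\alpha_p)^2$, and on the imaginary axis it entails the real-valued relations
$$1-|A|^2=\frac{4(n_p-1)\tau^2}{n_p^2\tau^2+\alpha_p^2},\quad |A|^2-|A'|^2=\frac{4(n_p-3)\tau^2}{n_p^2\tau^2+\alpha_p^2},\quad A'\bar A-A=\frac{-4(n_p-2)\tau^2}{n_p^2\tau^2+\alpha_p^2}.$$
Combining these with $|q(i\tau)|=1$, a straightforward expansion yields
$$|1-Aqz|^2-|A-A'qz|^2=\frac{4\tau^2}{n_p^2\tau^2+\alpha_p^2}\Bigl[(n_p-1)+(n_p-3)|qz|^2-2(n_p-2)\Re(qz)\Bigr].$$
Maximizing $\Re(qz)$ at $t=|qz|$ reduces the bracket to the quadratic $g(t)=(n_p-3)t^2-2(n_p-2)t+(n_p-1)$, which factorizes for $n_p\geq 4$ as $(n_p-3)(t-1)(t-(n_p-1)/(n_p-3))$ with both roots $\geq 1$, and reduces to $1-t^2$ and $2(1-t)$ in the cases $n_p=2,3$ respectively; in each case the bracket is strictly positive for $t\leq r_{\Gamma_{p-1}}<1$. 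Together with the uniform lower bound $4\tau^2/(n_p^2\tau^2+\alpha_p^2)\geq\min(4\delta^2/(n_p^2\delta^2+\alpha_p^2),4/n_p^2)$ for $|\tau|\geq\delta$ and the trivial bound $|1-Aqz|\leq 2$, this furnishes the desired $r_{\Gamma_p}<1$ on the imaginary axis.

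The main obstacle is to propagate the strict imaginary-axis estimates uniformly onto a complex strip. All functions involved are holomorphic in a neighborhood of the imaginary axis, and $|q(\omega)|=e^{-2\sigma a_{p-1}}$ differs from $1$ by $O(\epsilon)$; moreover, the large-$\tau$ limits $A\to(n_p-2)/n_p$ and $A'\to(n_p-4)/n_p$ exist and lead to exactly the same bracket expression, so the quantitative estimates do not degenerate as $|\Im\omega|\to\infty$. Choosing $\epsilon_{\Gamma_p}$ small enough that $e^{2\epsilon_{\Gamma_p}a_{p-1}}r_{\Gamma_{p-1}}<1$ and that the perturbations of $A$, $A'$ and of the bracket remain controlled uniformly in $\tau$, the strict inequalities persist on $|\Re\omega|<\epsilon_{\Gamma_p}$ with slightly worse but still admissible positive constants $c_{\Gamma_p}$ and $r_{\Gamma_p}<1$.
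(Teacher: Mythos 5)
Your proposal is correct and shares the paper's overall skeleton — induction on $p$ via the recursion of Lemma \ref{dets}, with the whole matter reduced to bounding the fraction $\frac{A-A'qz}{1-Aqz}$ with $q=e^{-2\omega a_{p-1}}$ and $z=\det\tilde D_{\Gamma_{p-1}}/\det D_{\Gamma_{p-1}}$ — but the key estimate is obtained by a genuinely different computation. The paper works directly with complex $\omega$ in the strip: writing $q=(n_p-2)\omega+\alpha_p$, the desired inequality becomes $|q(1-z)+2\omega z|<|q(1-z)+2\omega|$, whose expansion leaves exactly $|\omega|^2(1-|z|^2)+|1-z|^2\bigl((n_p-2)|\omega|^2+\alpha_p\Re\omega\bigr)>0$; the only possibly negative term $\alpha_p\Re\omega$ is absorbed using $|\omega|\geq\delta$ by taking $\epsilon_{\Gamma_p}$ small, which gives explicit admissible constants in one stroke, with no case analysis and no passage from the axis to the strip. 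You instead establish a difference-of-squares identity on the imaginary axis (your identities and the bracket $(n_p-1)+(n_p-3)t^2-2(n_p-2)t$ with its factorization are correct), and then transfer to the strip by perturbation. That transfer is the one step you should make explicit: $A$, $A'$ and $e^{-2\omega a_{p-1}}$ have derivatives bounded uniformly on $\{|\Im\omega|\geq\delta\}$ (by $C|\alpha_p|/\delta^2$, resp. $C a_{p-1}$), and your bracket bound holds uniformly over all complex $w$ with $|w|\leq\tilde r_{\Gamma_{p-1}}$; this matters because on the strip $z(\omega)$ need not be close to its imaginary-axis values, so the perturbation may only act on $A,A',q$ while $w=qz$ is treated as a free parameter in the closed disc of radius $\tilde r_{\Gamma_{p-1}}$ — which your maximization over $\Re(qz)$ indeed allows. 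Also, the lower bound on $|1-Aqz|$ does not follow from the ratio bound as such (numerator and denominator could both be small); it follows from your uniform difference-of-squares estimate, $|1-Aqz|^2\geq|A-A'qz|^2+c\geq c$, and you should invoke that inequality rather than the ratio. With these two clarifications your argument is complete; the paper's route is shorter and more quantitative, while yours makes the imaginary-axis structure and the dependence on the degree $n_p$ more transparent.
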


\begin{proof}
We shall prove this Lemma by recursion on $p$. For $p=1$ Lemma \ref{dets} insures us that
$$\det D_{\Gamma_1}(\omega)=\frac{n_1\,\omega+\alpha_1}{\omega+\alpha_1},\quad \frac{\det \tilde D_{\Gamma_{1}}(\omega)}{\det D_{\Gamma_{1}}(\omega)}=\frac{(n_1-2)\,\omega+\alpha_1}{n_1\,\omega+\alpha_1}.$$ 
We obtain a positive lower bound for $|\det D_{\Gamma_1}(\omega)|$ if we avoid that it approaches zero. Therefore the existence of $c_{\Gamma_1}>0$ is obtained by considering $\epsilon_{\Gamma_1}\leq \frac{|\alpha_1|}{2n_1}$. 
Next, we have
$$\Big|\frac{(n_1-2)\,\omega+\alpha_1}{n_1\,\omega+\alpha_1}\Big|<1\iff 0<\alpha_1\Re\omega+(n_1-1)|\omega|^2,$$
so for any $\delta>0$ we get an appropriate $0<r_{\Gamma_1}<1$ by choosing
$$\epsilon_{\Gamma_1}\leq \frac{(n_1-1)\delta^2}{2|\alpha_1|}.$$

Assume that we have proved this Lemma for $p-1$. We shall show now that it also holds  for $p$.
Now, from ratio information part in this Lemma for $\Gamma_{p-1}$ we can choose $\epsilon_{\Gamma_p}$ small enough to have for $|\Re \omega|<\epsilon_{\Gamma_p}$ and $|\Im\omega|>\delta$ 
$$\Big |1-\frac{(n_p-2)\,\omega+\alpha_p}{n_p\,\omega+\alpha_p}\,e^{-2\omega a_{p-1}}\frac{\det \tilde D_{\Gamma_{p-1}}(\omega)}{\det D_{\Gamma_{p-1}}(\omega)}\Big|>c_0>0.$$
Also from this Lemma for $\Gamma_{p-1}$ we have the existence of two positive constants  $ c_{\Gamma_{p-1}}$ and $\epsilon_{\Gamma_{p-1}}$ such that $|\det D_{\Gamma_{p-1}}(\omega)|>c_{\Gamma_{p-1}},\,\forall\omega\in\mathbb C, |\Re\omega|<\epsilon_{\Gamma_{p-1}}$ and $|\Im\omega|>\delta$. Finally, $\frac{n_p\omega+\alpha_p}{\omega+\alpha_p}$ is lower bounded by a positive constant for $\Re\omega$ small enough, so eventually we get 
$$\exists c_{\Gamma_{p}},\epsilon_{\Gamma_{p}}>0,\,|\det D_{\Gamma_{p}}(\omega)|>c_{\Gamma_{p}},\,\forall\omega\in\mathbb C, |\Re\omega|<\epsilon_{\Gamma_{p}},|\Im\omega|>\delta.$$ 
We are left with showing that the ratio $\frac{\det \tilde D_{\Gamma_p}(\omega)}{\det D_{\Gamma_p}(\omega)}$ is of modulus less than one. In view of the recursion formula on the ratio from Lemma \ref{dets}, {we first impose as a condition on $\epsilon_{\Gamma_p}$ that $$\tilde r_{\Gamma_{p-1}}:=e^{2\epsilon_{\Gamma_p}a_{p-1}}r_{\Gamma_{p-1}}<1,$$
and then we have to show that for $|z|<\tilde r_{\Gamma_{p-1}}$
$$\left|\frac{(n_p-2)\,\omega+\alpha_p-[(n_p-4)\,\omega+\alpha_p]z}{{n_p\,\omega+\alpha_p}-[(n_p-2)\,\omega+\alpha_p]z}\right|<r_{\Gamma_p},$$
for all complex $\omega$ with $|\Re\omega|<\epsilon_{\Gamma_p}$ and $ |\Im\omega|>\delta,$ for $\epsilon_{\Gamma_p}$ to be chosen and $r_{\Gamma_p}<1$}. Denoting $q=(n_p-2)\omega+\alpha_p$, the above inequality is written in as
$$ |q- (q-2\omega)z|<|(q+2\omega)-qz|\iff |q(1-z)+2\omega z|<|q(1-z)+2\omega|.$$
Expanding this last inequality we find that we have to prove that
$$0< |\omega|^2 (1-|z|^2)+|1-z|^2\Big(  (n_p-2)|\omega|^2+\alpha_p \Re (\omega) \Big).$$
Since $n_p\geq 2$ and $|z|<\tilde r_{\Gamma_{p-1}}<1$, it is enough to have
$$0< |\omega|^2 (1-|z|^2)+|1-z|^2\alpha_p \Re (\omega).$$
Also, $|\Re z|<\tilde r_{\Gamma_{p-1}}<1$, so by choosing 
$$\epsilon_{\Gamma_p}\leq \frac{(1-\tilde r_{\Gamma_{p-1}}^2)\delta^2}{2|\alpha_p|(1-\tilde r_{\Gamma_{p-1}})^2},$$ 
we get the existence of $r_{\Gamma_p}<1$. 
\end{proof}

\subsection{Vanishing of the numerator at $\tau=0$}Recall that we have denoted by $M^{c_e}_{\Gamma_p}(\omega)$ (respectively $\det M^{\tilde c_e}_{\Gamma_p}(\omega)$) the matrix $D_{\Gamma_p}(\omega)$ with the column corresponding to the unknown  $c_e$ (respectively $\tilde c_e$), $D_{\Gamma_p}^{e}(\omega)$  (respectively $D_{\Gamma_p}^{\tilde e}(\omega)$),  replaced by the free terms column $T_{\Gamma_p}(\omega)$.
In particular $\omega \det M^{c_e}_{\Gamma_p}(\omega)$ (respectively $\omega \det M^{\tilde c_e}_{\Gamma_p}(\omega)$) is the determinant of the matrix $D_{\Gamma_p}(\omega)$ with the column corresponding to the unknown  $c_e$ (respectively $\tilde c_e$) replaced by $\omega T_{\Gamma_p}(\omega)$.

\begin{lemma}\label{identity-lemma}The following holds
\begin{equation}\label{identity}
-(\omega T_{\Gamma_p}(\omega))(0)=\sum _{e\in \mathcal{E}} t_e(0,0)D_{\Gamma_p}^{e}(0)+\sum _{e\in \mathcal{I}} t_e(0,0)D_{\Gamma_p}^{\tilde e}(0)
\end{equation}
\end{lemma}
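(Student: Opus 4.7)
My plan is to verify the identity row by row in the linear system $D_{\Gamma_p}(\omega) X = T_{\Gamma_p}(\omega)$. The pivotal simplification is that $e^{-0\cdot|x-y|}=1$, so on every edge $e$ the quantity
$$t_e(x,0)=\frac{1}{2}\int_{I_e}u_0(y)\,dy=:\bar t_e$$
is independent of $x$; in particular $t_e(j(v,e),0)=\bar t_e$ at any vertex $v$ adjacent to $e$. Introduce the column vector $Y$ with $Y_{\tilde c_e}=\bar t_e$ at every $\tilde c_e$-column (over both internal and external edges) and $Y_{c_e}=0$ at every internal-edge $c_e$-column; the right-hand side of \eqref{identity} is then precisely $D_{\Gamma_p}(0)Y$, so it is enough to check
$$D_{\Gamma_p}(0)Y=-\bigl(\omega T_{\Gamma_p}(\omega)\bigr)\big|_{\omega=0}$$
row by row.

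For a continuity row at a vertex $v$ joining edges $e$ and $e'$, the equation states that $R_\omega\bu_0$ takes the same value at $v$ from edge $e$ and from edge $e'$, which produces matrix entries $e^{\omega j(v,e)},\,e^{-\omega j(v,e)},\,-e^{\omega j(v,e')},\,-e^{-\omega j(v,e')}$ on the four relevant columns and the free term $\bigl(t_{e'}(j(v,e'),\omega)-t_e(j(v,e),\omega)\bigr)/\omega$. At $\omega=0$ these collapse respectively to $1,1,-1,-1$ and, after multiplying the free term by $\omega$, to $\bar t_{e'}-\bar t_e$. Pairing with $Y$ gives $0+\bar t_e-0-\bar t_{e'}=\bar t_e-\bar t_{e'}$, which matches $-(\omega T)|_{\omega=0}$ on that row.

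For a $\delta$-coupling row at $v$, I would fix a representative edge $e_v\in E_v$ and use the normalization already encountered in \S\ref{star} for the star case and in the bottom row of the $\Gamma_p$ block. Under that normalization the coefficients of the representative edge's unknown(s) tend to $1$ at $\omega=0$, while the coefficients of the remaining edges in $E_v$ carry a factor $\omega/(\omega+\alpha(v))$ that vanishes there. The free-term entry takes the form $\frac{\omega-\alpha(v)}{\omega+\alpha(v)}\frac{t_{e_v}(j(v,e_v),\omega)}{\omega}$ plus summands carrying an extra $\omega/(\omega+\alpha(v))$ factor, so $(\omega T)|_{\omega=0}=\frac{-\alpha(v)}{\alpha(v)}\bar t_{e_v}=-\bar t_{e_v}$. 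Consequently $D_{\Gamma_p}(0)Y=1\cdot\bar t_{e_v}+1\cdot 0=\bar t_{e_v}$ (the $c_{e_v}$ slot contributing $0$ when $e_v$ is internal and being absent when external), matching $-(\omega T)|_{\omega=0}$ once again.

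The main obstacle is bookkeeping: one has to check that the ``representative-selecting'' pattern of the $\delta$-row truly persists at every vertex in the recursive construction of $D_{\Gamma_p}$ from $D_{\Gamma_{p-1}}$, and that when a new column $c_N$ is appended in the passage $\Gamma_{p-1}\to\Gamma_p$, the entries it introduces in the old rows (such as the $-1$ and $-\omega/(\omega+\alpha_{p-1})$ visible in the displayed block structure) are harmless at $\omega=0$: the factor $-\omega/(\omega+\alpha_{p-1})$ vanishes outright and the $-1$ is multiplied by $Y_{c_N}=0$. Granted this, the identity on the old rows is inherited directly from $\Gamma_{p-1}$ and the identity on the newly added rows is checked by precisely the computations above.
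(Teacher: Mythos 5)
Your proposal is correct and is essentially the paper's own argument: both exhibit the explicit vector (your $Y$, the paper's $X_p$) carrying $t_e(0,0)$ on the $\tilde c_e$-columns and $0$ on the internal $c_e$-columns, and verify $D_{\Gamma_p}(0)Y=-(\omega T_{\Gamma_p}(\omega))(0)$ using that $t_e(\cdot,0)$ is constant on each edge (so $t_e(0,0)=t_e(a_e,0)$). The only difference is presentational: the paper organizes the verification by induction on $p$ through the block structure of $D_{\Gamma_p}$, while you check the continuity and $\delta$-coupling rows directly and invoke the recursion only for bookkeeping, which amounts to the same computation.
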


\begin{rem}\label{rem-identity}
From the shape of $D_{\Gamma_p}(\omega)$ displayed in the proof of Lemma \ref{dets} we notice that the two junction columns with  $D_{\Gamma_{p-1}}(\omega)$, corresponding to the coefficients of the resolvent on the connecting edge $I_N$, are
$$D_{\Gamma_p}^{I_N}(\omega)=^t\left(0,\dots,0,-1,\frac{\omega}{\omega+\alpha_{p-1}},e^{-\omega a_{p-1}},0,\dots,0,\frac{-\omega+\alpha_p}{\omega+\alpha_p}e^{-\omega a_{p-1}}\right)$$
and 
$$D_{\Gamma_p}^{\tilde I_N}(\omega)=^t\left(0,\dots,0,-1,-\frac{\omega}{\omega+\alpha_{p-1}},e^{\omega a_{p-1}},0,\dots,0,e^{\omega a_{p-1}}\right).$$
In particular, these two columns are the same at $\omega=0$. Moreover, $D_{\Gamma_p}(\omega)$ contains $p-1$ such pair of columns, 
$D_{\Gamma_p}^{e}(0)=D_{\Gamma_p}^{\tilde e}(0)$ for all $e\in \mathcal {I}$. Thus, the last term in the right hand side of  \eqref{identity} could be  $D_{\Gamma_p}^{ e}(0)$ either $D_{\Gamma_p}^{\tilde e}(0)$, $e\in \mathcal{I}$.
\end{rem}

\begin{proof}We will prove this identity inductively. In the case $p=1$ we use that $(\omega T_{\Gamma_1})$ is given in Section \ref{star}.
We choose $X_1=(t_1(0,0),t_2(0,0), \dots, t_{n_1}(0,0))$ and then $D_{\Gamma_1}(0)X_1=-(\omega T_{\Gamma_1})(0)$ which proves \eqref{identity} when $p=1$.
 
 Given now $X_{p-1}$ such that $D_{\Gamma_{p-1}}(0)X_{p-1}=-(\omega T_{\Gamma_{p-1}}(\omega))(0)$ we construct 
 $X_p$ as follows 
 $$^tX_p=(^tX_{p-1}, 0, t_{N+1}(0,0), \dots, t_{N+n_p-1}(0,0)).$$
 Using the recursion between $D_{\Gamma_{p}}$ and $D_{\Gamma_{p-1}}$ used in the proof of Lemma \ref{dets}, identity
$$\omega T_{\Gamma_p}(\omega)=\left(\begin{array}{ccccccccccc}\omega T_{\Gamma_{p-1}}(\omega)\\
t_{N+1}(0,\omega)-t_{N}(a_{p-1},\omega)\\
...\\
t_{N+n_p-1}(0,\omega)-t_{N+n_p-2}(0,\omega)\\
\frac{\omega-\alpha_p}{\omega+\alpha_p}t_N(a_{p-1},\omega)+\frac{\omega}{\omega+\alpha_p}\sum_{1\leq j\leq n_p-1}t_{N+j}(0,\omega)\end{array}\right),$$
and the fact that $t_e(0,0)=t_e(a_e,0)$ for all $e\in \mathcal{I}$,
  we obtain that $X_p$ satisfies the system
$D_{\Gamma_{p}}(0)X_{p}=-(\omega T_{\Gamma_{p}}(\omega))(0)$. Writing  this identity in terms of the columns of matrix $D_{\Gamma_{p}}(0)$ we obtain the desired identity.
  \end{proof}

\begin{lemma}\label{numerator}
$\omega=0$ is a root of order at least $p-1$ of $ \omega\det M^{c_e}_{\Gamma_p}(\omega)$ and of  $ \omega\det M^{\tilde c_e}_{\Gamma_p}(\omega)$ for all edge $e$.
\end{lemma}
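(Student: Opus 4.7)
The strategy rests on two structural facts. First, as noted just before Lemma \ref{dets} and reiterated in Remark \ref{rem-identity}, for every internal edge $e'\in\mathcal{I}$ the columns $D_{\Gamma_p}^{e'}(\omega)$ and $D_{\Gamma_p}^{\tilde e'}(\omega)$ coincide at $\omega=0$, so their difference is divisible by $\omega$ as an analytic vector-valued function. Second, combining Lemma \ref{identity-lemma} with Remark \ref{rem-identity}, the column $-(\omega T_{\Gamma_p})(0)$ can be written as a linear combination of the columns $D_{\Gamma_p}^{\tilde e'}(0)$ for $e'\in\mathcal{E}\cup\mathcal{I}$ (identifying $D_{\Gamma_p}^{e'}(0)=D_{\Gamma_p}^{\tilde e'}(0)$ for internal $e'$).

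Fix an edge $e$ and let $M(\omega)$ denote the matrix obtained from $D_{\Gamma_p}(\omega)$ by replacing either $D_{\Gamma_p}^{e}(\omega)$ or $D_{\Gamma_p}^{\tilde e}(\omega)$ by the column $\omega T_{\Gamma_p}(\omega)$, so that $\det M(\omega)$ equals $\omega\det M^{c_e}_{\Gamma_p}(\omega)$ or $\omega\det M^{\tilde c_e}_{\Gamma_p}(\omega)$, respectively. For every internal edge $e'\neq e$ (or every $e'\in\mathcal{I}$ if $e\in\mathcal{E}$) perform the column operation $D_{\Gamma_p}^{e'}\to D_{\Gamma_p}^{e'}-D_{\Gamma_p}^{\tilde e'}$; the resulting column is $\omega$ times an analytic column. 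If $e\in\mathcal{E}$, all $p-1$ internal pairs survive and we immediately obtain $\det M(\omega)=\omega^{p-1}\tilde N(\omega)$ for some analytic $\tilde N$, which proves the claim.

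The delicate case is $e\in\mathcal{I}$: only $p-2$ pairs survive, so factoring out one $\omega$ from each modified column gives $\det M(\omega)=\omega^{p-2}N(\omega)$ for an analytic $N$, and it remains to prove $N(0)=0$. Among the unmodified columns of $N(0)$ one finds $D_{\Gamma_p}^{\tilde e'}(0)$ for every external $e'$, $D_{\Gamma_p}^{\tilde e'}(0)$ for every internal $e'\neq e$, and the surviving member of the broken pair, which at $\omega=0$ coincides with $D_{\Gamma_p}^{\tilde e}(0)$ by Remark \ref{rem-identity}. Hence every vector appearing on the right-hand side of \eqref{identity} is a column of $N(0)$, and Lemma \ref{identity-lemma} then exhibits $(\omega T_{\Gamma_p})(0)$ as a linear combination of columns of $N(0)$. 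This produces a nontrivial linear dependence among its columns, so $\det N(0)=0$ and the order of vanishing of $\det M(\omega)$ is at least $p-1$.

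The main obstacle is precisely the internal case. After breaking one of the pairs $(D_{\Gamma_p}^{e},D_{\Gamma_p}^{\tilde e})$, the term $t_e(0,0)D_{\Gamma_p}^{\tilde e}(0)$ appearing in \eqref{identity} might seem no longer to be available from the reduced matrix; the key observation is that Remark \ref{rem-identity} lets us recover this vector from the \emph{unreplaced} column of the broken pair. In the external case no identity is needed, since the $p-1$ surviving column pairs alone furnish the required order of vanishing.
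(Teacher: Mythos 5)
Your argument is correct and rests on exactly the same two ingredients as the paper's proof: the $p-1$ pairs of columns of $D_{\Gamma_p}(\omega)$ that coincide at $\omega=0$ (Remark \ref{rem-identity}) and the identity of Lemma \ref{identity-lemma} expressing $(\omega T_{\Gamma_p})(0)$ as a combination of one column from each pair together with the external columns. The only difference is presentational: you extract the order of vanishing by column operations and factoring out $\omega$, whereas the paper expands $\partial_\omega^k$ of the determinant column by column and checks that all terms up to order $p-2$ vanish, with the same case distinction (replaced column inside or outside a pair) and the same use of the identity to kill the top-order term.
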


\begin{proof}
We shall perform the proof for $ \omega\det M^{c_e}_{\Gamma_p}(\omega)$; the result for $ \omega\det M^{\tilde c_e}_{\Gamma_p}(\omega)$ will be the same.
From the shape of $D_{\Gamma_p}(\omega)$ displayed in the proof of Lemma \ref{dets} and Remark \ref{rem-identity} we have $p-1$ pairs of columns that are equal at $\omega=0$.
Moreover, by Lemma \ref{identity-lemma}, $(\omega T_{\Gamma_p})(0)$ is a linear combination of these columns evaluated at $\omega=0$.

 The derivative of a determinant is the sum  of the determinants of the matrices obtained by differentiating one column.
 When $T_{\Gamma_p}$ does not replace any of these $2(p-1)$ columns 
  it follows  that the result of this Lemma holds since there are always two columns identically.
     Then {by the above argument we have already}
\begin{equation}\label{p-2}
\partial_\omega^k (\omega \det M^{c_e}_{\Gamma_p})(0)=0,\,\,\forall \,0\leq k\leq p-3.
\end{equation}

   Assume now that $T_{\Gamma_p}$ replaces one of these $2(p-1)$ columns. 
For proving the Lemma we are left to show that
$$\partial_\omega^{p-2} (\omega\det M^{c_e}_{\Gamma_p})(0)=0.$$

Using again the fact that $D_{\Gamma_p}(\omega)$ contains $p-1$ pairs of columns that are the same two by two at $\omega=0$, we only need to show that $\det A_{\Gamma_p}(0)=0$, where $A_{\Gamma_p}(\omega)$ is $D_{\Gamma_p}(\omega)$ with the column $\omega T_{\Gamma_p}(\omega)$ replacing one column of one pair, and one column of each remaining $p-2$ pairs of columns is differentiated. In particular $A_{\Gamma_p}(0)$ contains one column of each $p-1$ pairs unchanged.
Since by Lemma \ref{identity-lemma} we know that $(\omega T_{\Gamma_p}(\omega))(0)$
is a linear combination of the columns corresponding to external edges and of the internal ones (each one from the $p-1$ pairs) the new determinant vanishes and the proof is finished.
\end{proof}

\begin{lemma}\label{coefs}
For all edges indices $\lambda$ and $e$, $\omega=0$ is a root of order at least $p-2$ for the coefficient $f_{\lambda,e}(\omega)$ of $t_\lambda(0,\omega)$ in $\omega \det M^{c_e}_{\Gamma_p}(\omega)$, and the same holds for the coefficient $\tilde f_{\lambda,e}(\omega)$ of $t_\lambda(0,\omega)$ in $\omega \det M^{\tilde c_e}_{\Gamma_p}(\omega)$.\end{lemma}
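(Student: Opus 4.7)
The plan is to lift the column-pair counting argument from Lemma \ref{numerator} to the finer level of individual coefficients. Since the free-term column $\omega T_{\Gamma_p}(\omega)$ depends linearly on the quantities $t_\mu(0,\omega)$ and $t_\mu(a_\mu,\omega)$ with vector coefficients that depend only on $\omega$, multilinearity of the determinant in its columns gives
$$\omega \det M^{c_e}_{\Gamma_p}(\omega) = \sum_\lambda f_{\lambda,e}(\omega)\, t_\lambda(0,\omega) + (\text{analogous contributions from } t_\mu(a_\mu,\omega)),$$
where $f_{\lambda,e}(\omega) = \det N_{\lambda,e}(\omega)$ and $N_{\lambda,e}(\omega)$ is the matrix $D_{\Gamma_p}(\omega)$ with the $c_e$-column replaced by the coefficient vector of $t_\lambda(0,\omega)$ inside $\omega T_{\Gamma_p}(\omega)$; the latter vector is purely $\omega$-dependent.

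The structural input, recorded in Remark \ref{rem-identity}, is that $D_{\Gamma_p}(\omega)$ possesses exactly $p-1$ pairs $\{D_{\Gamma_p}^e(\omega), D_{\Gamma_p}^{\tilde e}(\omega)\}$, indexed by internal edges $e\in\mathcal I$, whose two columns coincide at $\omega=0$. Replacing the single column $c_e$ destroys at most one such pair, so $N_{\lambda,e}(0)$ still has at least $p-2$ pairs of coincident columns. The treatment of $\tilde f_{\lambda,e}$ is identical; in the case of external $e$ one in fact retains all $p-1$ pairs.

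The key elementary fact I would invoke is the following: if a smoothly $\omega$-dependent square matrix $A(\omega)$ has $k$ pairs of equal columns at $\omega=0$, then $\partial_\omega^j \det A(0)=0$ for every $j<k$. Indeed, by the Leibniz rule $\partial_\omega^j \det A$ is a sum of determinants in which $j$ columns have been differentiated; if fewer than $k$ columns are differentiated then at least one of the $k$ pairs remains untouched, and an untouched pair at $\omega=0$ forces the determinant to vanish. Applying this with $k=p-2$ to $N_{\lambda,e}(\omega)$ gives $\partial_\omega^j f_{\lambda,e}(0)=0$ for all $0 \le j \le p-3$, which is precisely the assertion that $\omega=0$ is a zero of order at least $p-2$ of $f_{\lambda,e}$. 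The same argument with $\tilde c_e$ in place of $c_e$ handles $\tilde f_{\lambda,e}$.

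The only step that requires attention is the coefficient extraction in the first display: one must check that \emph{each} term $t_\lambda(0,\omega)$ really contributes a matrix $N_{\lambda,e}(\omega)$ that inherits the paired-column structure of $D_{\Gamma_p}(\omega)$. Inspecting the recursive expression of $T_{\Gamma_p}(\omega)$ from the proof of Lemma \ref{dets} shows that the coefficient vector of each $t_\lambda(0,\omega)$ is supported on only a few rows, so the columns of $D_{\Gamma_p}(\omega)$ other than the $c_e$-column are unaffected by the substitution, and the paired-column count is preserved. Beyond this bookkeeping I do not anticipate any serious difficulty.
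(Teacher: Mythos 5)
Your proof is correct and is essentially the paper's argument: the paper's own (one-line) proof of Lemma \ref{coefs} likewise observes that the matrix obtained by inserting the free-term column in place of the $c_e$- or $\tilde c_e$-column still contains $p-2$ pairs of columns that coincide at $\omega=0$, and invokes the same differentiate-one-column-at-a-time vanishing argument already used to obtain \eqref{p-2} in Lemma \ref{numerator}. Your write-up merely makes explicit the multilinearity step that isolates the coefficient of $t_\lambda(0,\omega)$ as a determinant, which is a faithful elaboration rather than a different route.
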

\begin{proof}
This result follows from the discussion that led to \eqref{p-2}: the matrix $\omega M^{c_e}_{\Gamma_p}(\omega)$ has $p-2$ pairs of columns that are identical at $\omega=0$.
\end{proof}
\begin{lemma}\label{coefsext}
For all edge index $e$ and all external edge index $\lambda$, $\omega=0$ is a root of order at least $p-1$ for the coefficient $f_{\lambda,e}(\omega)$ of $t_\lambda(0,\omega)$ in $\omega \det M^{c_e}_{\Gamma_p}(\omega)$, and the same holds for the coefficient $\tilde f_{\lambda,e}(\omega)$ of $t_\lambda(0,\omega)$ in $\omega \det M^{\tilde c_e}_{\Gamma_p}(\omega)$.
\end{lemma}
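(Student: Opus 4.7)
The approach would be to strengthen the argument of Lemma \ref{coefs} by exhibiting one additional linear dependence among the columns of the determinantal matrix at $\omega=0$, available precisely when $\lambda$ is external. Expanding by multilinearity
$$\omega T_{\Gamma_p}(\omega)=\sum_{\mu} t_\mu(0,\omega)\,T^\mu_0(\omega)+\sum_{\nu\in\mathcal I} t_\nu(a_\nu,\omega)\,T^\nu_a(\omega),$$
the coefficient $f_{\lambda,e}(\omega)$ is the determinant of the matrix obtained from $D_{\Gamma_p}(\omega)$ by substituting the $c_e$-column with $T^\lambda_0(\omega)$. As already used in the proof of Lemma \ref{coefs}, this substitution leaves $p-2$ pairs of columns that agree at $\omega=0$, namely all internal-edge pairs $(c_{e'},\tilde c_{e'})$ with $e'\in\mathcal I$ and $e'\neq e$.

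I would next invoke Lemma \ref{identity-lemma} to pin down $T^\lambda_0(0)$ for external $\lambda$. For such $\lambda$ the symbol $t_\lambda(a_\lambda,\cdot)$ is not present (an external edge is parametrized by $[0,\infty)$), so matching the coefficient of $t_\lambda(0,0)$ in the identity \eqref{identity} is unambiguous and yields $T^\lambda_0(0)=-D_{\Gamma_p}^{\lambda}(0)$. Thus in the modified matrix the substituted column coincides at $\omega=0$, up to a sign, with the unperturbed $\tilde c_\lambda$-column $D_{\Gamma_p}^{\lambda}(0)$, which is still present since $\lambda\in\mathcal E$ while $e\in\mathcal I$, so $\lambda\neq e$.

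This produces a further pair of proportional columns at $\omega=0$, involving the new column placed in the former $c_e$-slot and the $\tilde c_\lambda$-column; neither of these belongs to any intact pair $(c_{e'},\tilde c_{e'})$ with $e'\neq e$, so the new relation is linearly independent of the previous ones. Altogether one collects $p-1$ independent linear dependences among the columns at $\omega=0$, and the standard multilinearity--Leibniz argument, where each pair forces at least one of its two columns to be differentiated in order for the corresponding summand to contribute, yields that $f_{\lambda,e}$ vanishes at $\omega=0$ to order at least $p-1$. The argument for $\tilde f_{\lambda,e}$ would be word-for-word identical when $e\in\mathcal I$; and if $e\in\mathcal E$, the $\tilde c_e$-column was not part of any pair to begin with, so all $p-1$ internal pairs remain intact and the bound follows a fortiori. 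The delicate point is to check that the newly exhibited relation is genuinely independent of the existing pairs, and this is exactly where the hypothesis $\lambda\in\mathcal E$ enters: if $\lambda$ were internal, the $\tilde c_\lambda$-column would already be part of the pair $(c_\lambda,\tilde c_\lambda)$ and no additional dependence would be produced.
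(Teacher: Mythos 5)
Your proposal is correct and takes essentially the same route as the paper: the paper's proof simply specializes Lemma \ref{numerator} (and hence Lemma \ref{identity-lemma}) to the free-term column in which $t_\lambda(0,\omega)$ is set to one and all other data to zero, which is exactly the identification $T^\lambda_0(0)=-D^{\lambda}_{\Gamma_p}(0)$ that you obtain by coefficient matching in \eqref{identity}. Your subsequent pair-counting/differentiation argument, including the observation that the external column $\tilde c_\lambda$ is never consumed by the $p-2$ intact internal pairs, is the same mechanism as in the proof of Lemma \ref{numerator}, just written out explicitly.
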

\begin{proof}

%
%
%

The statement corresponds to the particular case of Lemma \ref{numerator} where all the components of  $T_{\Gamma_p}$ are taken to be zero except $t_{\lambda}(0,\omega)$  which is replaced by one.
\end{proof}

\begin{lemma}\label{coefsint}
For all edge index $e$ and all internal edge index $\lambda$, $\omega=0$ is a root of order at least $p-1$ for $f^1_{\lambda,e}(\omega)+f^2_{\lambda,e}(\omega)$ where $f^1_{\lambda,e}(\omega)$ is the coefficient of $t_\lambda(0,\omega)$ in $\omega \det M^{c_e}_{\Gamma_p}(\omega)$ and $f^2_{\lambda,e}(\omega)$ is the coefficient of $t_\lambda(a_\lambda,\omega)$ in $\omega \det M^{c_e}_{\Gamma_p}(\omega)$. Also,  the same holds for $\tilde f^1_{\lambda,e}(\omega)+\tilde f^2_{\lambda,e}(\omega)$, where $\tilde f^1_{\lambda,e}(\omega)$ is the coefficient of $t_\lambda(0,\omega)$ in $\omega\det M^{\tilde c_e}_{\Gamma_p}(\omega)$ and $\tilde f^2_{\lambda,e}(\omega)$ is the coefficient of $t_\lambda(a_\lambda,\omega)$ in $\omega\det M^{\tilde c_e}_{\Gamma_p}(\omega)$
\end{lemma}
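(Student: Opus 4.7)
The plan is to reduce the statement to Lemma~\ref{numerator} applied to a tailored formal choice of free terms. By multilinearity of the determinant in the free-term column, $\omega\det M^{c_e}_{\Gamma_p}(\omega)$ is a linear function of the quantities $t_\mu(0,\omega)$ (for any $\mu$) and $t_\mu(a_\mu,\omega)$ (for internal $\mu$). Thus, if one sets formally $t_\lambda(0,\omega)\equiv 1$, $t_\lambda(a_\lambda,\omega)\equiv 1$ and $t_\mu\equiv 0$ for every $\mu\neq\lambda$, the determinant reduces exactly to $f^1_{\lambda,e}(\omega)+f^2_{\lambda,e}(\omega)$. It is therefore enough to show that the conclusion of Lemma~\ref{numerator}, namely the vanishing of order at least $p-1$ at $\omega=0$, still holds for this formal configuration.

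The proof of Lemma~\ref{numerator} relies on two ingredients. The first is the existence of $p-1$ pairs of columns of $D_{\Gamma_p}(\omega)$ coinciding at $\omega=0$ (Remark~\ref{rem-identity}); this is a purely structural property, independent of the free terms. The second is the identity \eqref{identity}, which asserts that $(\omega T_{\Gamma_p})(0)$ lies in the span of one representative column per internal pair together with the external columns. Revisiting the inductive construction of the vector $X_p$ in the proof of Lemma~\ref{identity-lemma}, one sees that the only property of the $t_e$'s that enters is $t_e(0,0)=t_e(a_e,0)$ for internal edges, needed to match a junction entry with the inductive value. For our formal choice this relation is trivially satisfied: $t_\lambda(0,0)=t_\lambda(a_\lambda,0)=1$ for the chosen internal edge $\lambda$, and $t_e(0,0)=t_e(a_e,0)=0$ for every other edge. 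Consequently identity \eqref{identity} remains valid with $(\omega T_{\Gamma_p})(0)=-D^{\tilde\lambda}_{\Gamma_p}(0)$, the vector $X_p$ may be defined exactly as before, and the proof of Lemma~\ref{numerator} applies verbatim, yielding that $f^1_{\lambda,e}(\omega)+f^2_{\lambda,e}(\omega)$ vanishes to order at least $p-1$ at $\omega=0$. The argument for $\tilde f^1_{\lambda,e}(\omega)+\tilde f^2_{\lambda,e}(\omega)$ is identical: only the column of $D_{\Gamma_p}(\omega)$ replaced by $\omega T_{\Gamma_p}$ changes, while the paired-column structure and the identity \eqref{identity} are untouched.

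The step I expect to be the most delicate is precisely this justification that identity \eqref{identity}, originally derived in Lemma~\ref{identity-lemma} for $t_e$'s coming from a genuine initial datum $u_0$ via $t_e(x,\omega)=\frac{1}{2}\int_{I_e} u_0(y)e^{-\omega|x-y|}\,dy$, extends to our formal configuration. The key observation is that the explicit form of $t_e$ enters the proof of Lemma~\ref{identity-lemma} only through the single relation $t_e(0,0)=t_e(a_e,0)$ for internal edges, and this relation is preserved by our formal choice by construction. Once this compatibility check is made, everything else is a direct application of Lemma~\ref{numerator} combined with the multilinearity of the determinant.
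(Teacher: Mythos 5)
Your proof is correct and follows essentially the same route as the paper: the paper also treats this as the particular case of Lemma~\ref{numerator} in which all components of $T_{\Gamma_p}$ are set to zero except those involving $t_\lambda(0,\omega)$ and $t_\lambda(a_\lambda,\omega)$, which are replaced by one. Your explicit check that the proof of Lemma~\ref{identity-lemma} only uses the relation $t_e(0,0)=t_e(a_e,0)$ for internal edges is precisely the (implicit) justification the paper relies on.
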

\begin{proof}
The proof goes the same as for Lemma \ref{coefsext}.
\end{proof}

\subsection{The end of the proof}\label{sect:end}
Now we shall use the theorem hypothesis, $\partial _{\omega}^{(p-1)}\det D_{\Gamma_p}\,_{\scriptscriptstyle{\vert \omega=0}}\neq 0$.  We obtain that $\omega=0$ is a root of order  $p-1$ of $\det D_{\Gamma_p}$.
From the previous subsections we conclude the following result.
\begin{lemma}\label{ancontres} Function $\omega R_\omega \bold f(x)$ can be analytically continued in a region containing the imaginary axis.
\end{lemma}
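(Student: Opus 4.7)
The plan is to inspect formula \eqref{ressum} edge by edge and show that, after multiplication by $\omega$, every term extends analytically to a strip around $i\mathbb{R}$. On a given edge $I_e$,
$$\omega R_\omega \mathbf{f}(x) = \frac{\omega\,\det M^{c_e}_{\Gamma_p}(\omega)}{\det D_{\Gamma_p}(\omega)}\,e^{\omega x} + \frac{\omega\,\det M^{\tilde c_e}_{\Gamma_p}(\omega)}{\det D_{\Gamma_p}(\omega)}\,e^{-\omega x} + t_e(x,\omega).$$
The last term $t_e(x,\omega)=\tfrac12\int_{I_e}\mathbf{f}(y)\,e^{-\omega|x-y|}\,dy$ is entire in $\omega$ for any $\mathbf{f}\in L^1$, and the exponentials $e^{\pm\omega x}$ are also entire, so the whole issue is to prove that the two rational functions of $\omega$ appearing above extend holomorphically across $i\mathbb{R}$.

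First I would handle the portion of the imaginary axis away from the origin. Fix any $\delta>0$. By Lemma \ref{infboundaway}, there exist $c_{\Gamma_p},\epsilon_{\Gamma_p}>0$ such that $|\det D_{\Gamma_p}(\omega)|\geq c_{\Gamma_p}$ on $\{|\Re\omega|<\epsilon_{\Gamma_p},\ |\Im\omega|>\delta\}$. Since the numerators $\omega\det M^{c_e}_{\Gamma_p}$, $\omega\det M^{\tilde c_e}_{\Gamma_p}$ are entire in $\omega$ (being polynomial combinations of $e^{\pm\omega a_k}$, rational functions of $\omega$ with no poles there, and the entire terms $t_e(\cdot,\omega)$), the two ratios are holomorphic in this region.

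The remaining task is to remove the apparent singularity at $\omega=0$. Under the standing hypothesis $\partial_\omega^{(p-1)}\det D_{\Gamma_p}\big|_{\omega=0}\neq 0$ together with the fact, already noted after the display of $D_{\Gamma_p}$, that $\det D_{\Gamma_p}$ has a zero of order at least $p-1$ at $\omega=0$, the denominator factors as $\det D_{\Gamma_p}(\omega)=\omega^{p-1}h(\omega)$ with $h$ analytic near $0$ and $h(0)\neq 0$. By continuity of $h$, there is $\eta>0$ with $h(\omega)\neq 0$ for $|\omega|<\eta$. On the other hand, Lemma \ref{numerator} yields $\omega\det M^{c_e}_{\Gamma_p}(\omega)=\omega^{p-1}k_e(\omega)$ and $\omega\det M^{\tilde c_e}_{\Gamma_p}(\omega)=\omega^{p-1}\tilde k_e(\omega)$ with $k_e,\tilde k_e$ analytic. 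Cancelling the common factor $\omega^{p-1}$ shows that each ratio coincides on a punctured neighbourhood of $0$ with $k_e/h$ (resp.\ $\tilde k_e/h$), which is analytic on the full disk $\{|\omega|<\eta\}$; thus $\omega=0$ is a removable singularity.

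Combining the two regions, by choosing $\delta<\eta/2$ and then shrinking $\epsilon_{\Gamma_p}$ if necessary, the strip $\{|\Re\omega|<\min(\epsilon_{\Gamma_p},\eta/2)\}$ is covered by the union $\{|\Im\omega|>\delta\}\cup\{|\omega|<\eta\}$, and on this strip $\omega R_\omega\mathbf{f}(x)$ is holomorphic, giving the desired analytic continuation across $i\mathbb{R}$. The real work has already been done: the only nontrivial point is the matching between the exact order $p-1$ of the zero of $\det D_{\Gamma_p}$ at the origin (from the theorem hypothesis) and the lower bound $p-1$ on the order of vanishing of the numerators (from Lemma \ref{numerator}), and this is exactly the mechanism that rules out a zero resonance.
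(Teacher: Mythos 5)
Your proposal is correct and follows essentially the same route as the paper: the paper's own proof simply cites the decomposition \eqref{ressum} together with Lemma \ref{infboundaway}, Lemma \ref{numerator}, and the hypothesis that $\omega=0$ is a zero of $\det D_{\Gamma_p}$ of order exactly $p-1$, which is precisely the cancellation-of-$\omega^{p-1}$ and away-from-zero lower bound argument you spell out. Your version merely makes explicit the removability of the singularity at the origin and the patching of the two regions, which the paper leaves implicit.
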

\begin{proof}The proof is an immediate consequence of decomposition \eqref{ressum}:
\begin{equation}\label{reso}
R_\omega \bold u_0(x)=\frac{\det M^{c_e}_{\Gamma_p}(\omega)}{\det D_{\Gamma_p}(\omega)}\,e^{\omega x}+\frac{\det M^{\tilde c_e}_{\Gamma_p}(\omega)}{\det D_{\Gamma_p}(\omega)}\,e^{-\omega x}+\frac{t_e(x,\omega)}{\omega},
\end{equation}
 for $x\in I_e$, and Lemma \ref{infboundaway},  Lemma \ref{numerator} and the fact that $\omega=0$ is a root of order $p-1$ for $\det D_{\Gamma_p}$.
\end{proof}

\begin{proof}[Proof of Theorem \ref{disp}]
As a consequence of Lemma \ref{ancontres} we can use a spectral calculus argument to write the solution of the Schr\"odinger equation with initial data $\bu_0$ as
\begin{equation}\label{SL}
e^{-it H_\alpha^\Gamma }P\bu_0(x)=\frac{1}{i\pi}
\int_{-\infty}^{\infty}e^{-it\tau^2}\tau R_{i\tau}\bu_0(x){d\tau}.
\end{equation}
{In view of the definition of $t_e$ and with the notations from Lemma \ref{coefsext} and Lemma  \ref{coefsint} we can also write the decomposition \eqref{reso} as
\begin{equation}\label{decomposition}
\tau R_{i\tau} \bu_0(x)=\frac{1}{2}\int_{I_e}\bu_0\,e^{-i\tau|x-y|}dy+\sum_{\lambda \in \mathcal{E}}\frac{f_{\lambda,e}(i\tau)}{\det D_{\Gamma_p}(i\tau)}\int_{I_\lambda}\bu_0(y)
e^{i\tau y}dy\,e^{i\tau x}
\end{equation}
$$+\sum_{\lambda\in \mathcal{E}}\frac{\tilde f_{\lambda,e}(i\tau)}{\det D_{\Gamma_p}(i\tau)}\int_{I_\lambda}\bu_0(y)
e^{i\tau y}dy\,e^{-i\tau x}$$
$$+\sum_{\lambda \in \mathcal{I} }\int_{I_\lambda}\bu_0(y)
\left(e^{i\tau y}\frac{f^1_{\lambda,e}(i\tau)}{\det D_{\Gamma_p}(i\tau)}+e^{i\tau(a_\lambda-y)}\frac{f^2_{\lambda,e}(i\tau)}{\det D_{\Gamma_p}(i\tau)}\right)dy\,e^{i\tau x}$$
$$+\sum_{\lambda \in \mathcal{I}}\int_{I_\lambda}\bu_0(y)
\left(e^{i\tau y}\frac{\tilde f^1_{\lambda,e}(i\tau)}{\det D_{\Gamma_p}(i\tau)}+e^{i\tau(a_\lambda-y)}\frac{\tilde f^2_{\lambda,e}(i\tau)}{\det D_{\Gamma_p}(i\tau)}\right)dy\,e^{-i\tau x}.$$
Moreover, in view of the results in Lemma \ref{coefsint} and Lemma \ref{coefsext} we gather the terms as follows
\begin{equation}\label{decompositionbis}
\tau R_{i\tau} \bu_0(x)=\frac{1}{2}\int_{I_e}\bu_0\,e^{-i\tau|x-y|}dy
\end{equation}
$$+\sum_{\lambda \in \mathcal{E}}\int_{I_\lambda}\bu_0(y)\,\frac{f_{\lambda,e}(i\tau)}{\det D_{\Gamma_p}(i\tau)}
e^{i\tau (x+y)}\,dy+\sum_{\lambda\in \mathcal{E}}\int_{I_\lambda}\bu_0(y)\,\frac{\tilde f_{\lambda,e}(i\tau)}{\det D_{\Gamma_p}(i\tau)}
e^{i\tau (y-x)}\,dy$$
$$+\sum_{\lambda \in \mathcal{I}}\int_{I_\lambda}\bu_0(y)
\,\frac{f^1_{\lambda,e}(i\tau)+f^2_{\lambda,e}(i\tau)}{\det D_{\Gamma_p}(i\tau)}\,e^{i\tau (x+y)}\,dy+\sum_{\lambda \in \mathcal{I}}\int_{I_\lambda}\bu_0(y)
\,\frac{\tilde f^1_{\lambda,e}(i\tau)+\tilde f^2_{\lambda,e}(i\tau)}{\det D_{\Gamma_p}(i\tau)}\,e^{i\tau (y-x)}\,dy$$
$$+\sum_{\lambda \in \mathcal{I} }\int_{I_\lambda}\bu_0(y)\,
\frac{\left(e^{i\tau(a_\lambda-y)}-e^{i\tau y}\right)f^2_{\lambda,e}(i\tau)}{\det D_{\Gamma_p}(i\tau)}\,e^{i\tau x}\,dy$$
$$+\sum_{\lambda \in \mathcal{I}}\int_{I_\lambda}\bu_0(y)\,
\frac{\left(e^{i\tau(a_\lambda-y)}-e^{i\tau y}\right)\tilde f^2_{\lambda,e}(i\tau)}{\det D_{\Gamma_p}(i\tau)}\,e^{-i\tau x}\,dy.$$}

Let $e$ be an external edge. In view of  Lemma \ref{coefsext} and the fact that $\omega=0$ is a root of order  $p-1$ of $\det D_{\Gamma_p}$, we obtain that the fraction $\frac{f_{\lambda,e}(i\tau)}{\det D_{\Gamma_p}(i\tau)}$ is upper bounded near $\tau=0$. Outside a neighbourhood of $\tau=0$ we use   Lemma \ref{infboundaway} to infer that  
$|\det D_{\Gamma_p}(i\tau)|$
  is positively lower bounded outside neighbourhoods of $\tau=0$. Moreover, in view of the explicit entries of $M^{c_e}_{\Gamma_p}(i\tau)$, we see that $f_{\lambda,e}(i\tau)$ is upper bounded for any $\tau\in \R$ since all the entries of matrix $D_{\Gamma_p}(i\tau)$ as well as the coefficients of $t_\lambda$ in $T_{\Gamma_p(i\tau)}$ have absolute value less than one.  Summarizing, we have obtained that 
$$\frac{f_{\lambda,e}(i\tau)}{\det D_{\Gamma_p}(i\tau)}\in L^\infty(\R).$$ 
The derivative of this fraction is upper-bounded near $\tau=0$ by limited development at $\tau=0$. Outside neighbourhoods of $\tau=0$ we have that  $\partial_\tau f_{\lambda,e}(i\tau)$ and $\partial_\tau\det D_{\Gamma_p}(i\tau)$  have upper bounds of type $\frac{1}{\tau^2}$. This is because in each term of $\partial_\tau f_{\lambda,e}(i\tau)$ and $\partial_\tau\det D_{\Gamma_p}(i\tau)$ contains a derivative of an element of the line given by the $\delta$-condition involving the derivatives in the root vertex $\mathcal{O}$. This vertex is the one which is an initial vertex for all {$n$} edges emerging from it: $I(e)=\mathcal{O},\forall e\in E,\mathcal{O}\in e$. If $\alpha$ denotes the strength of the $\delta$-condition in $\mathcal O$, then this line of the matrix $D_{\Gamma_p}(i\tau)$ is composed by $0,1$ and $\pm\frac{i\tau}{i\tau+\alpha}$, 
where the minus sign appears only on  the finite edges that stars from $\mathcal{O}$,
and this line for {the column matrix} $i \tau T_{\Gamma_p}(i\tau)$ is 
$$\left(\frac{i\tau -\alpha}{i\tau+\alpha}\,t_1(0,i \tau)+\frac{i\tau}{i\tau+\alpha}\sum_{2\leq j\leq n}t_j(0,i\tau )\right).$$
Finally, as above, $f_{\lambda,e}(i\tau)$ and $\det D_{\Gamma_p}(i\tau)$ are upper bounded and from Lemma \ref{infboundaway} we have that $|\det D_{\Gamma_p}(i\tau)|$  is positively lower bounded outside neighbourhoods of $\tau=0$. 
As a conclusion we infer that 
$$\partial_\tau \frac{f_{\lambda,e}(i\tau)}{\det D_{\Gamma_p}(i\tau)}\in L^1(\mathbb R).$$
The same argument using Lemma \ref{coefsext}, Lemma \ref{coefsint} and Lemma \ref{coefs} can be performed to obtain that 
$$\frac{\tilde f_{\lambda,e}(i\tau)}{\det D_{\Gamma_p}(i\tau)},\quad \frac{f^1_{\lambda,e}(i\tau)+f^2_{\lambda,e}(i\tau)}{\det D_{\Gamma_p}(i\tau)},\quad \frac{\tilde f^1_{\lambda,e}(i\tau)+\tilde f^2_{\lambda,e}(i\tau)}{\det D_{\Gamma_p}(i\tau)},$$
$$\frac{\left(e^{i\tau(a_\lambda-y)}-e^{i\tau y}\right)f^2_{\lambda,e}(i\tau)}{\det D_{\Gamma_p}(i\tau)},\quad \frac{\left(e^{i\tau(a_\lambda-y)}-e^{i\tau y}\right)\tilde f^2_{\lambda,e}(i\tau)}{\det D_{\Gamma_p}(i\tau)}$$
are in $L^\infty$ with derivative in $L^1$. Notice that when $\lambda$ belongs to an internal edge $I_\lambda$ it follows that the interval $I_\lambda$ have finite length.  Therefore for the last fractions we use  that $\left(e^{i\tau(a_\lambda-y)}-e^{i\tau y}\right) f^2_{\lambda,e}(i\tau)$ vanishes or order $p-1$ at $\tau=0$ and repeat the argument used above. The only difference with the previous cases is that we will obtain bounds in terms of parameter $y$. Since $y$ is now on an internal edge  $I_\lambda$ of finite length we obtain uniform bounds.
Therefore the dispersion estimate \eqref{dispersion} of Theorem \ref{disp} follows from \eqref{SL} by using \eqref{decompositionbis} and the classical oscillatory integral estimate
\begin{equation}\label{IBP}\left|\int_{-\infty}^{\infty}e^{-it\tau^2}e^{i\tau a} g(\tau)d\tau\right|\leq \frac{C}{\sqrt{|t|}}\left(\|g\|_{L^\infty}+\|g'\|_{L^1}\right).\end{equation}

\end{proof}

\section{Appendix: The multiplicity of the root $\omega=0$ of $\det D_{\Gamma_p}(\omega)$}
In this Appendix we prove that the condition \eqref{cond} is fullfilled in the case of positive strengths. We shall show first the following double property. 
\begin{lemma}\label{ratio} For all $p\geq 1$ we have the following informations
$$(\mathcal P_p^1):\,\frac{\det \tilde D_{\Gamma_{p}}}{\det D_{\Gamma_{p}}}(0)=1\,\,\,,\,\,\,(\mathcal P_p^2):\,\partial_\omega\left(\frac{\det \tilde D_{\Gamma_{p}}}{\det D_{\Gamma_{p}}}\right)(0)<0.$$
\end{lemma}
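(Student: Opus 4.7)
The plan is to establish both $(\mathcal{P}_p^1)$ and $(\mathcal{P}_p^2)$ simultaneously by induction on $p$, feeding off the recursion formulas of Lemma \ref{dets} and exploiting the assumed positivity of the strengths $\alpha_j$ and of the edge lengths $a_{p-1}$. For the base case $p=1$, the closed form
\[
\frac{\det \tilde D_{\Gamma_1}(\omega)}{\det D_{\Gamma_1}(\omega)}=\frac{(n_1-2)\omega+\alpha_1}{n_1\omega+\alpha_1}
\]
gives value $1$ at $\omega=0$ (since both numerator and denominator equal $\alpha_1$), and its derivative at $\omega=0$ is $-2/\alpha_1<0$ by direct differentiation.

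For the inductive step, I would set $g_p(\omega):=\frac{\det \tilde D_{\Gamma_p}(\omega)}{\det D_{\Gamma_p}(\omega)}$ and, following the recursion, write $g_p=N/D$ with
\[
N(\omega)=h(\omega)-k(\omega)\,e^{-2\omega a_{p-1}}g_{p-1}(\omega),\qquad D(\omega)=1-h(\omega)\,e^{-2\omega a_{p-1}}g_{p-1}(\omega),
\]
where $h(\omega)=\frac{(n_p-2)\omega+\alpha_p}{n_p\omega+\alpha_p}$ and $k(\omega)=\frac{(n_p-4)\omega+\alpha_p}{n_p\omega+\alpha_p}$. Since $h(0)=k(0)=1$ and by induction $g_{p-1}(0)=1$, one has $N(0)=D(0)=0$; the ratio is thus a $0/0$ indeterminate form at $\omega=0$ and must be resolved by a Taylor expansion to second order.

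The key computations are $h'(0)=-2/\alpha_p$, $k'(0)=-4/\alpha_p$, and, writing $F(\omega)=e^{-2\omega a_{p-1}}g_{p-1}(\omega)$ so that $F(0)=1$ and $F'(0)=-2a_{p-1}+g_{p-1}'(0)$, the first-order terms coincide:
\[
N'(0)=D'(0)=\frac{2}{\alpha_p}+2a_{p-1}-g_{p-1}'(0)>0,
\]
by the inductive hypothesis $g_{p-1}'(0)<0$ and positivity of $a_{p-1},\alpha_p$. Hence $g_p(\omega)$ extends analytically at $\omega=0$ with $g_p(0)=N'(0)/D'(0)=1$, proving $(\mathcal{P}_p^1)$. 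Writing then $N=\omega A$, $D=\omega B$ with $A(0)=B(0)=N'(0)$, one has
\[
g_p'(0)=\frac{N''(0)-D''(0)}{2\,N'(0)}.
\]
Using $h''(0)=4n_p/\alpha_p^2$ and $k''(0)=8n_p/\alpha_p^2$, the $n_p$-dependent pieces cancel and one obtains the clean identity
\[
N''(0)-D''(0)=\frac{4\,(g_{p-1}'(0)-2a_{p-1})}{\alpha_p},
\]
which is negative by the induction hypothesis. Dividing by the positive quantity $2N'(0)$ gives $g_p'(0)<0$, establishing $(\mathcal{P}_p^2)$.

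The main obstacle is the $0/0$ indeterminacy produced by the recursion, which forces one to push the Taylor expansion of both $N$ and $D$ to order two and to witness the cancellation of the $n_p$-terms in $N''(0)-D''(0)$; without this cancellation, the sign of $g_p'(0)$ could a priori depend on $n_p$ rather than just on $g_{p-1}'(0)$, $a_{p-1}$ and $\alpha_p$, and the induction would break. Once this algebraic miracle is verified, sign propagation is immediate from the positivity assumptions.
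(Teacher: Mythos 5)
Your proof is correct and follows essentially the same inductive scheme as the paper: the same base case, the same first-order identity $N'(0)=D'(0)=\tfrac{2}{\alpha_p}+2a_{p-1}-g_{p-1}'(0)$ with l'H\^opital for $(\mathcal P_p^1)$, and sign propagation for $(\mathcal P_p^2)$. The only difference is computational: where you expand $N$ and $D$ to second order and check the cancellation of the $n_p$-terms in $N''(0)-D''(0)$ (which I verified is correct), the paper instead uses the exact factorization $P_p-Q_p=-\tfrac{2\omega}{n_p\omega+\alpha_p}\bigl(1-e^{-2a_{p-1}\omega}g_{p-1}(\omega)\bigr)$ to divide out $\omega$ and work only with first derivatives, arriving at the same expression for $g_p'(0)$.
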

\begin{proof}Lemma \ref{dets} insures us that $\frac{\det \tilde D_{\Gamma_{1}}}{\det D_{\Gamma_{1}}}(\omega)=\frac{(n_1-2)\,\omega+\alpha_1}{n_1\,\omega+\alpha_1}$, and in particular
$$\partial_\omega\left(\frac{\det \tilde D_{\Gamma_{1}}}{\det D_{\Gamma_{1}}}\right)(\omega)=-\frac{2\alpha_1}{(n_1\omega+\alpha_1)^2},$$
and the Lemma follows for $p=1$, {since $\alpha_1>0$}. We shall show the general case by recursion. Let us denote by $P_p(\omega)$ and $Q_p(\omega)$ the numerator and respectively the denominator in recursion formula of the ratio from Lemma \ref{dets}
$$P_p(\omega)=\frac{(n_p-2)\,\omega+\alpha_p}{n_p\,\omega+\alpha_p}-\frac{(n_p-4)\,\omega+\alpha_p}{n_p\,\omega+\alpha_p}\,e^{-2\omega a_{p-1}}\frac{\det \tilde D_{\Gamma_{p-1}}(\omega)}{\det D_{\Gamma_{p-1}}(\omega)},$$ 
$$Q_p(\omega)=1-\frac{(n_p-2)\,\omega+\alpha_p}{n_p\,\omega+\alpha_p}\,e^{-2\omega a_{p-1}}\frac{\det \tilde D_{\Gamma_{p-1}}(\omega)}{\det D_{\Gamma_{p-1}}(\omega)}.$$
We have $P_p(0)=Q_p(0)=0$, and in view of $(\mathcal P_{p-1}^1)$ we compute
$$\partial_\omega P_p(0)=\partial_\omega Q_p(0)=\frac 2\alpha_p+2a_{p-1}-\partial_\omega\left(\frac{\det \tilde D_{\Gamma_{p-1}}}{\det D_{\Gamma_{p-1}}}\right)(0).$$
Therefore $(\mathcal P_{p-1}^2)$ insures us that $\partial_\omega P_p(0)=\partial_\omega Q_p(0)\neq 0$ and we apply l'H\^opital's rule to conclude $(\mathcal P_{p}^1)$.\\

Since
$$P_p(\omega)-Q_p(\omega)=-\frac{2\omega}{n_p\omega+\alpha_p}\left(1-e^{-2a_{p-1}\omega}\frac{\det \tilde D_{\Gamma_{p-1}}}{\det D_{\Gamma_{p-1}}}(\omega)\right).$$
we define $\tilde P_p(\omega)$ and $\tilde Q_p(\omega)$ by
$$P_p(\omega)=\frac{2\omega}{n_p\omega+\alpha_p}\tilde P_p(\omega),\,\,Q_p(\omega)=\frac{2\omega}{n_p\omega+\alpha_p}\tilde Q_p(\omega).$$
In particular
$$\frac{\det \tilde D_{\Gamma_{p}}}{\det D_{\Gamma_{p}}}(\omega)=\frac{\tilde P_p}{\tilde Q_p}(\omega), \quad\tilde P_p(\omega)-\tilde Q_p(\omega)=-\left(1-e^{-2a_{p-1}\omega}\frac{\det \tilde D_{\Gamma_{p-1}}}{\det D_{\Gamma_{p-1}}}(\omega)\right).$$
By using $(\mathcal P_{p-1}^1)$ and $(\mathcal P_{p-1}^2)$
$$\partial_\omega(\tilde P_p-\tilde Q_p)(0)=-2a_{p-1}+\partial_\omega\left(\frac{\det \tilde D_{\Gamma_{p-1}}}{\det D_{\Gamma_{p-1}}}\right)(0)$$
Moreover,
 $$\tilde P_p(0)=\tilde Q_p(0)=\frac{\alpha_p}{2}\,\partial_\omega P_p(0)=\frac{\alpha_p}{2}\,\partial_\omega Q_p(0)\neq 0,$$
and we can compute
$$\partial_\omega\left(\frac{\det \tilde D_{\Gamma_{p}}}{\det D_{\Gamma_{p}}}\right)(0)=\frac{\partial_\omega\tilde P_p(0)\tilde Q_p(0)-\tilde P_p(0)\partial_\omega\tilde Q_p(0)}{(\tilde Q_p(0))^2}=\frac{\partial_\omega(\tilde P_p-\tilde Q_p)(0)}{\tilde Q_p(0)}$$
$$=-\frac{2a_{p-1}-\partial_\omega\left(\frac{\det \tilde D_{\Gamma_{p-1}}}{\det D_{\Gamma_{p-1}}}\right)(0)}
{\frac{\alpha_p}{2}\left(\frac 2{\alpha_p}+2a_{p-1}-\partial_\omega\left(\frac{\det \tilde D_{\Gamma_{p-1}}}{\det D_{\Gamma_{p-1}}}\right)(0)\right)}.$$
By using again $(\mathcal P_{p-1}^2)$ we obtain $(\mathcal P_{p}^2)$.

\end{proof}

\begin{lemma}\label{root}
$\omega=0$ is a root of order $p-1$ of $\det D_{\Gamma_p}(\omega)$. In particular, condition \eqref{cond} is fulfilled.
\end{lemma}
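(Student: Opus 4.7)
The plan is to prove Lemma \ref{root} by induction on $p$, exploiting the recursion for $\det D_{\Gamma_p}$ established in Lemma \ref{dets} together with the two properties $(\mathcal P_{p-1}^1)$ and $(\mathcal P_{p-1}^2)$ proved in Lemma \ref{ratio}. The final statement that condition \eqref{cond} is fulfilled will then follow immediately because a root of order exactly $p-1$ of $\det D_{\Gamma_p}$ is equivalent to $\partial_\omega^{(p-1)}\det D_{\Gamma_p}\,_{\scriptscriptstyle{\vert \omega=0}}\neq 0$.

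For the base case $p=1$, the explicit expression $\det D_{\Gamma_1}(\omega)=\frac{n_1\omega+\alpha_1}{\omega+\alpha_1}$ gives $\det D_{\Gamma_1}(0)=1\neq 0$, so $\omega=0$ is a root of order $0=p-1$. For the inductive step, assume that $\det D_{\Gamma_{p-1}}$ vanishes at $\omega=0$ with order exactly $p-2$. The recursion reads
$$\det D_{\Gamma_p}(\omega)=\underbrace{\frac{n_p\,\omega+\alpha_p}{\omega+\alpha_p}e^{\omega a_{p-1}}}_{=:A(\omega)}\,\det D_{\Gamma_{p-1}}(\omega)\,\underbrace{\left(1-\frac{(n_p-2)\,\omega+\alpha_p}{n_p\,\omega+\alpha_p}\,e^{-2\omega a_{p-1}}\frac{\det \tilde D_{\Gamma_{p-1}}(\omega)}{\det D_{\Gamma_{p-1}}(\omega)}\right)}_{=:B(\omega)}.$$
The factor $A(\omega)$ satisfies $A(0)=1\neq 0$, so it contributes order zero. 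The inductive hypothesis contributes exactly $p-2$. Hence the whole proof reduces to showing that $B$ vanishes to order exactly $1$ at $\omega=0$.

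For this, property $(\mathcal P_{p-1}^1)$ gives $\frac{\det\tilde D_{\Gamma_{p-1}}}{\det D_{\Gamma_{p-1}}}(0)=1$, and since all three factors $\frac{(n_p-2)\omega+\alpha_p}{n_p\omega+\alpha_p}$, $e^{-2\omega a_{p-1}}$ and the ratio equal $1$ at $\omega=0$, we get $B(0)=0$. Computing $\partial_\omega B(0)$ by the Leibniz rule on the three factors, using $\partial_\omega\frac{(n_p-2)\omega+\alpha_p}{n_p\omega+\alpha_p}\,_{\scriptscriptstyle{\vert\omega=0}}=-\frac{2}{\alpha_p}$ and $\partial_\omega e^{-2\omega a_{p-1}}\,_{\scriptscriptstyle{\vert\omega=0}}=-2a_{p-1}$, yields
$$\partial_\omega B(0)=\frac{2}{\alpha_p}+2a_{p-1}-\partial_\omega\!\left(\frac{\det\tilde D_{\Gamma_{p-1}}}{\det D_{\Gamma_{p-1}}}\right)(0).$$
In the positive strengths regime $\alpha_p>0$, combined with $a_{p-1}>0$ and with $(\mathcal P_{p-1}^2)$ (which states that the derivative of the ratio at $0$ is strictly negative), all three contributions are strictly positive, so $\partial_\omega B(0)>0$. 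Thus $B$ has a zero of order exactly $1$ at $\omega=0$, and multiplying by $A\cdot\det D_{\Gamma_{p-1}}$ gives $\det D_{\Gamma_p}$ a zero of exact order $(p-2)+1=p-1$, which completes the induction.

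The argument is essentially algebraic and relies on combining the already-proved recursions; the only non-trivial piece is the sign analysis of $\partial_\omega B(0)$, and this is exactly what Lemma \ref{ratio} was designed to handle. The delicate point to be careful about is that we use positivity of $\alpha_p$ and of $a_{p-1}$ (as well as the strictly negative sign in $(\mathcal P_{p-1}^2)$) to ensure no cancellation occurs in $\partial_\omega B(0)$; without positivity of all strengths this quantity could a priori vanish, which is precisely why the condition \eqref{cond} has to be imposed as a hypothesis in Theorem \ref{disp} rather than proved in general.
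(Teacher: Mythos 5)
Your proof is correct and follows essentially the same route as the paper: induction via the recursion of Lemma \ref{dets}, reducing to showing that the bracketed factor (the paper's $Q_p$) has a simple zero at $\omega=0$, which is exactly the computation $\partial_\omega Q_p(0)=\tfrac{2}{\alpha_p}+2a_{p-1}-\partial_\omega\bigl(\tfrac{\det\tilde D_{\Gamma_{p-1}}}{\det D_{\Gamma_{p-1}}}\bigr)(0)\neq 0$ already carried out in the proof of Lemma \ref{ratio} and cited there by the paper. Your re-derivation of this derivative and the sign discussion under positive strengths match the paper's argument.
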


\begin{proof}From Lemma \ref{dets} we have $\det D_{\Gamma_1}(\omega)=n_1\,\omega+\alpha$, so $\det D_{\Gamma_1}(0)\neq 0$. Lemma \ref{dets} provides us the expression
$$\det D_{\Gamma_p}(\omega)=\frac{n_p\,\omega+\alpha_p}{\omega+\alpha_p}e^{\omega a_{p-1}}\det D_{\Gamma_{p-1}}(\omega)\left(1-\frac{(n_p-2)\,\omega+\alpha_p}{n_p\,\omega+\alpha_p}\,e^{-2\omega a_{p-1}}\frac{\det \tilde D_{\Gamma_{p-1}}}{\det D_{\Gamma_{p-1}}}(\omega)\right),$$
so by recursion it is enough to show that $\omega=0$ is a simple root for
$$1-\frac{(n_p-2)\,\omega+\alpha_p}{n_p\,\omega+\alpha_p}\,e^{-2\omega a_{p-1}}\frac{\det \tilde D_{\Gamma_{p-1}}}{\det D_{\Gamma_{p-1}}}(\omega).$$
This expression is precisely $Q_p(\omega)$ from the proof of Lemma \ref{ratio}, and it was proved there that $\partial_\omega Q_p(0)\neq 0$.
\end{proof}

\section{Appendix: Strichartz estimates}\label{appStrich}

In this Appendix we prove Theorem \ref{strichartz}. Let us first remark that by the definition of $P_e$ we have
$$P_e{\bf{\phi}}=\sum _{k=1}^m <\phi,\varphi_k> \varphi_k,$$
where $\{\varphi_k\}_{k=1}^m$ are eigenfunctions of operator $H$. Since $\varphi_k\in L^2(\Gamma)$  we have that $\varphi_k\in L^1(\Gamma)\cap L^\infty(\Gamma)$. Indeed, on the infinite edges the eigenfunctions corresponding to an eigenvalue $\lambda<0$ are of type $C\exp(-\sqrt{-\lambda})x$. This means that they belong to $L^1(e)\cap L^\infty(e)$ for any external edge $e$. On the 
internal edges this property trivially holds. 

Then $P_e$ is defined for any $\phi\in L^r(\Gamma)$, $1\leq r\leq \infty$, and
for any $1\leq r_1,r_2\leq \infty$ we have by H\"older inequality:
\begin{align*}
\|P_e\phi\|_{L^{r_2}(\Gamma)}&\leq \sum _{k=1}^m| <\phi,\varphi_k>| \|\varphi_k\|_{L^{r_2}(\Gamma)}\\
&\leq \|\phi\|_{L^{r_1}(\Gamma)}\sum _{k=1}^m \|\varphi_k\|_{L^{r_1'}(\Gamma)} \|\varphi_k\|_{L^{r_2}(\Gamma)}\leq C(\Gamma,r_1,r_2) \|\phi\|_{L^{r_1}(\Gamma)}.
\end{align*}

\begin{proof}[Proof of Theorem \ref{strichartz}]
Using the dispersive estimate \eqref{dispersion} and the mass conservation
$$\|e^{-itH}\bu_0\|_{L^2(\Gamma)}=\|\bu_0\|_{L^2(\Gamma)}$$
we obtain by applying the classical $TT^*$ argument and Christ-Kiselev Lemma \cite{0974.47025} the following estimates 
\begin{equation}\label{est.1}
\|e^{-itH}P\bu_0\|_{L^q(\R,L^r(\Gamma))}\leq C\|\bu_0\|_{L^2(\Gamma)},
\end{equation}
and
\begin{equation}\label{est.2}
\Big\| \int _0^t e^{-i(t-s)}P {\bf{F}} (s)ds\Big \|_{L^q((0,T),L^r(\Gamma))}\leq C\|{\bf{F}} \|_{L^{\tilde r'}((0,T), L^{\tilde r'}(\Gamma))}.
\end{equation}
Using now Stone's theorem we obtain that
$$e^{-itH}\phi=e^{-itH}P\phi+e^{-itH}P_e\phi=e^{-itH}P\phi+\sum _{k=1}^m e^{it\lambda_k^2} <\phi,\varphi_k>\varphi_k,$$
where by $\lambda_k$ we denote the eigenvalue of the eigenfunction $\varphi_k$. 
We claim that for all $\alpha\geq 1$,
\begin{equation}\label{est.3}
\|e^{-itH}P_e\bu_0\|_{L^q((0,T),L^r(\Gamma))}\leq C T^{1/q} \|\bu_0\|_{L^\alpha(\Gamma)}
\end{equation}
and
\begin{equation}\label{est.4}
\Big\| \int _0^t e^{-i(t-s)}P_e {\bf{F}} (s)ds\Big \|_{L^q((0,T),L^r(\Gamma))}\leq CT^{1/q}\|{\bf{F}} \|_{L^1((0,T), L^{\alpha}(\Gamma))}.
\end{equation}
Putting together estimates \eqref{est.1}, \eqref{est.2}, \eqref{est.3} and \eqref{est.4} we obtain the desired result. We now prove estimates 
\eqref{est.3} and \eqref{est.4}.

In the case of estimate \eqref{est.3} using the fact   that
$$e^{-itH}P_e\bu_0=\sum _{k=1}^m e^{it\lambda_k^2} <\bu_0,\varphi_k>\varphi_k$$
we obtain by H\"older's inequality that for any $\alpha\geq 1$,
\begin{align*}
\|e^{-itH}P_e\bu_0\|_{L^r(\Gamma)}&\leq \sum _{k=1}^m | <\bu_0,\varphi_k>| \|\varphi_k\|_{L^r(\Gamma)}\\
&\leq 
\|\bu_0\|_{L^\alpha(\Gamma)} \sum _{k=1}^m \|\varphi_k\|_{L^{\alpha'}(\Gamma)} \|\varphi_k\|_{L^r(\Gamma)}\leq C\|\bu_0\|_{L^\alpha(\Gamma)}.
\end{align*}
Taking the $L^q$-norm on the time interval $(0,T)$ we obtain estimate \eqref{est.3}.

In a similar way we have
$$\|e^{-i(t-s)H}P_e{\bf{F}}(s)\|_{L^r(\Gamma)}\leq C\|{\bf{F}}(s)\|_{L^{\alpha}(\Gamma)}.$$
Using Minkowski's inequality we obtain that
\begin{align*}
\Big \| \int _0^t e^{-i(t-s)H}P_e {\bf{F}}(s)ds\Big \|_{L^q((0,T),L^r(\Gamma))}&\leq \Big 
\| \int _0^t \| e^{-i(t-s)H}P_e{\bf{F}}(s)\|_{L^r(\Gamma)}ds \Big\|_{L^q(0,T)}\\
&\leq T^{1/q}\int _0^T \|{\bf{F}}(s)\|_{L^\alpha(\Gamma)}ds.
\end{align*} 
which proves estimate \eqref{est.4}.

\end{proof}

\section{Appendix: general couplings}\label{gen}
In this appendix we consider general coupling conditions at each vertex $v$ (see \eqref{con.1} in \S 2),
$$
A^v{ \bf {u}} (v)+B^v{\bf {u}}'(v)=0.
$$
Using the notations introduced in this article, we shall give the recursion formulae for obtaining $\det D_{\Gamma_p}$ for general couplings. As a consequence, we shall give a sufficient condition for obtaining the dispersion.

We follow the approach in \S\ref{sect:res} for computing the resolvent.  
For a star-shaped graph with $n_1$ edges $I_j$ parametrized by $x\in[0,\infty[$, with coupling conditions $(A^1,B^1)$, the resolvent on each edge $I_j$ is
$$
R_\omega \bold u_0(x)=\tilde c_j\,e^{-\omega
x}+\frac{1}{2\omega}\int_0^\infty \bold u_0(y)\,e^{-\omega \mo{x-y}}dy.$$
The coupling conditions yield as a system for $\tilde c$'s:
$$(A^1+\omega  B^1)\left(\begin{array}{c}\tilde c_1\\:\\\tilde c_{n_1} \end{array}\right)=\left(\begin{array}{cccccccc}
\sum_{1\leq j\leq n_1}\frac{t_j(0,\omega)}{\omega}(b_{1,j}\,\omega-a_{1,j})\\
...\\
\sum_{1\leq j\leq n_1}\frac{t_j(0,\omega)}{\omega}(b_{n_1,j}\,\omega-a_{n_1,j}) \end{array}\right).$$
We denote by $D_{\Gamma_1}(\omega)$ the matrix of the system. We define $\tilde D_{\Gamma_1}(\omega)$ to be the matrix $((A^1+\omega B^1)_1,(A^1+\omega B^1)_2,...,(A^1-\omega B^1)_{n_1})$. By $(A^1+\omega B^1)_j$ we mean the $j$th column of $A^1+\omega B^1$.
 
The case of a general tree with $p$ vertices can again be seen as constructed by adding a new vertex $v^p$ to a $p-1$-vertices tree, with coupling conditions $(A^p,B^p)$, from which emerge new $n_p-1$ infinite edges. Similarly as for Lemma \ref{dets} we derive the recursion formulae
$$\det D_{\Gamma_1}(\omega)=\det(A^1+\omega  B^1),$$
$$\frac{\det \tilde D_{\Gamma_{1}}(\omega)}{\det D_{\Gamma_{1}}(\omega)}=\frac{\det((A^1+\omega B^1)_1,(A^1+\omega B^1)_2,...,(A^1-\omega B^1)_{n_1})}{\det(A^1+\omega B^1)},$$
$$\det D_{\Gamma_p}(\omega)=\det(A^p+\omega B^p)\,e^{\omega a_{p-1}}\det D_{\Gamma_{p-1}}(\omega)$$
$$\times\left(1-\frac{\det((A^p-\omega B^p)_1,(A^p+\omega B^p)_2,...,(A^p+\omega B^p)_{n_p})}{\det(A^p+\omega B^p)}\,e^{-2\omega a_{p-1}}\frac{\det \tilde D_{\Gamma_{p-1}}(\omega)}{\det D_{\Gamma_{p-1}}(\omega)}\right).$$
$$\frac{\det \tilde D_{\Gamma_{p}}(\omega)}{\det D_{\Gamma_{p}}(\omega)}=\left(\frac{\det((A^p+\omega B^p)_1,(A^p+\omega B^p)_2,...,(A^p-\omega B^p)_{n_p})}{\det(A^p+\omega B^p)}\right.$$
$$\left.-\frac{\det((A^p-\omega B^p)_1,(A^p+\omega B^p)_2,...,(A^p-\omega B^p)_{n_p})}{\det(A^p+\omega B^p)}\,e^{-2\omega a_{p-1}}\frac{\det \tilde D_{\Gamma_{p-1}}(\omega)}{\det D_{\Gamma_{p-1}}(\omega)}\right)$$
$$\times\left(1-\frac{\det((A^p-\omega B^p)_1,(A^p+\omega B^p)_2,...,(A^p+\omega B^p)_{n_p})}{\det(A^p+\omega B^p)}\,e^{-2\omega a_{p-1}}\frac{\det \tilde D_{\Gamma_{p-1}}(\omega)}{\det D_{\Gamma_{p-1}}(\omega)}\right)^{-1}.$$ 
A sufficient condition for using the spectral formula as in \S \ref{sect:end} and then for getting the dispersion the following constraint, depending only on the entries of $(A^j,B^j)_{1\leq j\leq p}$ is the following one:
\begin{equation}\label{conddet}
|\det D_{\Gamma_p}(i\omega)|\neq 0,\forall \omega\in\mathbb R.
\end{equation}
This is the way the Kirchhoff coupling case was ruled in \cite{MR2858075} and this might be used in other cases. In the $\delta-$coupling case presented in this article, and probably in many other cases, such an estimate is not valid. Then an analysis around the  zeros of $\det D_{\Gamma_p}(\omega)$ has to be done starting from the above recursion formulae.  


\end{document}